%
%
%
%
%

\RequirePackage{fix-cm}
\documentclass[smallextended]{svjour3}       
\smartqed  
\usepackage{graphicx}
\usepackage{amsmath}
\usepackage[misc]{ifsym}
\usepackage{mathptmx}      
\newcommand{\bc} {\begin{corollary}}
\newcommand{\ec} {\end{corollary}}
\newcommand{\bd} {\begin{definition}}
\newcommand{\ed} {\end{definition}}
\newcommand{\bl} {\begin{lemma}}

\newcommand{\bt} {\begin{theorem}}
\newcommand{\et} {\end{theorem}}
\newcommand{\be} {\begin{equation}}
\newcommand{\ee} {\end{equation}}
\newcommand{\br} {\begin{remark}}
\newcommand{\er} {\end{remark}}
%
%
%
\begin{document}

\title{Global Dynamics of a  Predator-Prey Model with State-Dependent Maturation-Delay}
\author{Qianqian Zhang  \and Yuan Yuan
       \and Yunfei Lv \and Shengqiang Liu\footnote{Corresponding author. \email{sqliu@tiangong.edu.cn}}  
}

\authorrunning{Zhang, Yuan, Lv and Liu} 

\institute{Qianqian Zhang\at School of Mathematics, Harbin Institute of Technology, Harbin
150001, China  
           \and
           Yuan Yuan \at
            Department of Mathematics and Statistics, Memorial University of Newfoundland, St. John's,
NL, A1C 5S7, Canada \and Yunfei Lv, Shengqiang Liu \at School of Mathematical Sciences, Tiangong University, Tianjin 300387, China
    }

\date{ }

\maketitle

\begin{abstract}
In this paper, a stage structured predator-prey model with general nonlinear type of functional response  is established and analyzed. The state-dependent time delay (hereafter   SDTD) is the time taken from predator's birth to its maturity, formatted  as
a monotonical (ly) increasing, continuous(ly) differentiable and bounded   function  on the number of mature predator. The model is quite different from many previous models with SDTD, in the sense that the  derivative of delay on the time is involved in the model. First, we have shown that
for a large class of commonly used types of functional responses, including Holling types I, II  and III, Beddington-DeAngelis-type (hereafter   BD-type), etc,
the predator
coexists with the prey permanently if and only if the predator's net reproduction number is larger than one unit;
Secondly, we have discussed the local stability of the equilibria of the model;
Finally, for the special case of BD-type functional response, we claim that if the system is permanent, that is, the derivative of SDTD on the state is small enough  and the predator interference is large enough, then  the coexistence equilibrium is globally asymptotically stable.
\end{abstract}

\keywords{State-dependent mature delay  \and Predator-prey model \and Permanence \and Extinction  \and Global stability}

\section{Introduction}
\label{intro}
In the modeling of natural ecosystems, after Hutchison proposed a Logistic population model with delay, the time delay differential equation models have received widely attention \cite{1,2}. In addition, since the growth process of mammalian has gone through immature and mature stages
there is  mature time delay, and their behaviors are different at different stages. Therefore, it is necessary to consider the stage structure in the population model. Gurney et al. \cite{3,4} established a stage structure model of the green-headed fly with mature time delay, and numerical simulation based on the experiment data  of the green-headed fly experiment of Nicholson   verified the rationality of the model in the biological sense.

 
In 1992, motivated by the significant influences of the population densities on maturation length  of juvenile seals and whales found in  \cite{5},  Aiello   et al \cite{10} argued that the maturity delay of the population should be a function of the total number of populations, which  shows that due to the complexity of the ecological environment, the time lag may be adjusted continuously as the state changes, i.e., the constant time delay \cite{6,7,8,9}  can not describe the growth of the population as well as SDTD. In \cite{10},  the following SDTD single-population model is  established and analyzed:
\begin{eqnarray*}
 x_i'(t) &= &\alpha x_m(t)-\gamma x_i(t)-\alpha e^{-\gamma\tau(z(t))}x_m(t-\tau(z(t))),\\
 x_m'(t) &= &\alpha e^{-\gamma\tau(z(t))}x_m(t-\tau(z(t))) -\beta x_m^2(t).
\label{eq1}
\end{eqnarray*}
where $x_i(t)$ and $x_m(t)$ are the number of immature and mature populations at time $t$, $\alpha$ is the birth rate, $\beta$ is the internal competition coefficient of mature populations, and $\gamma$ is the mortality rate of immature populations, the SDTD $\tau(z(t))$ is taken to be an increasing differentiable bounded function of the total population $z(t) = x_i(t) + x_m(t)$. An attracting region is determined for solutions, which collapses to the unique positive equilibrium in the state-independent case.

In 2005, based on the model (\ref{eq1}),  Al-Omari et al. \cite{13} studied the following stage-dependent population model with a  SDTD:
\begin{eqnarray*}
x_i'(t) & = &R(x_m(t))-\gamma x_i(t)-R(x_m(t-\tau(z(t)))) e^{-\gamma\tau(z(t))},\\
x_m'(t) & = &R(x_m(t-\tau(z(t)))) e^{-\gamma\tau(z(t))} -\beta x_m(t).
\end{eqnarray*}
where the immature birth rate $R(y(t))$ is taken as a general function of the present mature population and the death rate for the mature one is a constant. They provided the sufficient conditions for the global stability of the extinction equilibrium  and existence of periodic solutions. Late, Magpantay et al \cite{MagpantayWu} gave an age-structured single-species population model that accounts for complex life cycles
and competition for resources limiting the transition to maturity, shown an interesting
numerical scheme and simulations to integrate the equations with significant applications. Late, by modeling the state-dependent delay as maturity period in the juvenile zooplankton population, Kloosterman et al. studied  a closed nutrient-phytoplankton-zooplankton  model
that includes size structure in the juvenile zooplankton. In 2017, Lv et al. \cite{16} studied an SDTD competitive model, where the SDTD  is taken to be an increasing differentiable bounded function of the number of its own population, and completely analyzed the global stability of the equilibria by using the comparison principle and iterative method.
For other non-predator-prey biological models with SDTD, we refer to \cite{Rezounenko2012,Rezounenko2017,Hu2014,Hu2016,LiGuo2017} and the references therein.

On the other hand,   as one of the  central goal
for ecologists, delayed predator-prey interaction has attracted many attentions. Following   pioneering works in age-structured predator-prey models by May \cite{May1975}, Hastings \cite{Hastings1983,Hastings1984}, Murdoch et al., \cite{Murdoch1987} and the stage structured predator-prey models with constant delay in \cite{6,7,8,9}, there are some recent works relate to the SDTD  predator-prey model. For example, in 2015, based on the \cite{7} constant delay model, Al-Omari \cite{14}  established and analyzed the following predator-prey model with state-dependent delay:
\begin{eqnarray*}
x'(t) & = &rx(t)(1-\frac{x(t)}{K})-ax(t)z(t)-h_1x(t),\\
y'(t) & = &bx(t)z(t)-bx\big(t-\tau(u(t))\big)z\big(t-\tau(u(t))\big)e^{-(\gamma+h_2)\tau(u(t))}-\gamma y(t)-h_2y(t),\\
z'(t) & = &bx\big(t-\tau(u(t))\big)z\big(t-\tau(u(t))\big)e^{-(\gamma+h_2)\tau(u(t))}-dz(t)-h_3z(t).
\end{eqnarray*}
where $x(t), y(t), z(t)$ represent the number of prey, juvenile predator and adult predator, respectively, $b, d, a$ are the adult predator's birth rate, mortality and capture rate, respectively, h1, h2, h3 are the prey, juvenile predator and adult predator capture rate, $\gamma$ is the juvenile predator mortality rate, $u(t)=y(t)+z(t)$, the state dependent delay $\tau(u(t))$ is a function of the total number of populations, investigated the global stability of trivial and the boundary equilibria by using Liapunov functional and LaSalle invariant principle.

In 2018, Lv et al. \cite{15} proposed and studied a predator-prey model with SDTD where the prey population is assumed to have an age structure:
\begin{equation*}
\begin{split}
x_i'(t)=&\alpha x_m(t)-\gamma x_i(t)-\alpha e^{-\gamma \tau(t)}x_m(t-\tau(t))(1-\tau'(t)),\\
x_m'(t)=&\alpha e^{-\gamma \tau(t)}x_m(t-\tau(t))-\beta x^2_m(t)-\frac{\mu x_m(t)y(t)}{x_m(t)+hy(t)},\\
y'(t)=&\frac{\mu_1 x_m(t)y(t)}{x_m(t)+hy(t)}-\delta y(t),\\
1-\tau'(t)=&\frac{k(z(t))}{k\big(z(t-\tau(t))\big)}.
\end{split}
\end{equation*}
where, $x_i(t), x_m(t)$ and $y(t)$ are the number of juvenile prey, adult feast and predator at time $t$, $\alpha$ and $\gamma$ are the birth rate and mortality of juvenile and adult bait respectively, $\beta$ is the intraspecific competition coefficient of adult prey, $\delta$ is predator mortality rate, $z(t)=x_i(t)+ x_m(t)$ is the total number of prey. For the global dynamics of the system, they discuss an attracting region which is determined by solutions, and the region collapses to the interior equilibrium in the constant delay case.

In 2018, Wang et al. \cite{11} established and analyzed a state-dependent time-delay model with the delayed time-derivative  term:
\begin{eqnarray*}
x_i'(t) & = &\alpha x_m(t)-\gamma x_i(t)-\alpha(1-\tau'(z(t))z'(t))e^{-\gamma\tau(z(t))}x_m(t-\tau(z(t))), \\
x_m'(t) & = &\alpha(1-\tau'(z(t))z'(t))e^{-\gamma\tau(z(t))}x_m(t-\tau(z(t))) -\beta x_m^2(t).
\end{eqnarray*}
This model is clearly different from the previous models in the sense that it includes the correction term $1-\tau'(z(t))z'(t)$ in the maturity rate. Permanence of the system was analyzed, and explicit bounds for the eventual behaviors of the immature and mature populations are established in  \cite{11}, while the global stability was not discussed.

In this paper, following the research method of Wang et al. \cite{11}, based on the age structure model, we consider a SDTD predator-prey model where the predators was divided into immature and mature, mature delay is a monotonically increasing continuous differentiable bounded function that depends on the number of adult predators. In other words, if the number of adult predators is large, the maturity period is longer, and the population size is reduced as slow growth of adult predators.   Our aim is to conduct a qualitative analysis of the model to study the persistence, extinction and stability of the population.

Our paper is organized as follows. In Sect. 2, we establish a state-dependent time-delay predator-prey model with the delay time-derivative  term. In Sect. 3, we discuss the positivity and boundedness of solutions. In Sect. 4, we give the necessary and sufficient conditions for the permanence and necessity of the population. In Sect. 5, we discuss the linearized stability of all equilibria of the model. In Sect. 6, we prove the global attractiveness of positive equilibrium of the model. The summary and discussion are presented in Sect. 7.

\section{Model Derivation}
\label{sec:1}
Motivated by the ideas in \cite{7,9,MagpantayWu,11}, we consider the growth of predator through immature and mature two stages. In order to distinguish immature individuals, $y_j(t)$, from mature ones, $y(t)$, we introduce a threshold age $\tau(y(t))$, which is the maturation time for an immature individual that matures at time $t$ depending on the number of mature predator, $y(t)$. Let $\rho(t,a)$ be the density of predator of age $a$ at time $t$. Then the number of immature, $y_j(t)$, and  mature $y(t)$ is given by
$$y_j(t)=\int_{0}^{\tau(y(t))}\rho(t,a)da ~~~~\mbox{and} ~~~~y(t)=\int_{\tau(y(t))}^{\infty}\rho(t,a)da, $$ respectively.
The dynamics of predator with age structure can be represented (see \cite{12,18}) by the following partial differential equations:
\begin{equation}\label{M1}
\begin{split}
\frac{\partial \rho(t,a)}{\partial t}+\frac{\partial \rho(t,a)}{\partial a} &=-d_j\rho(t,a),~~~if~a\leq \tau(y(t)), \\
\frac{\partial \rho(t,a)}{\partial t}+\frac{\partial \rho(t,a)}{\partial a} &=-d\rho(t,a),~~~if~a>\tau(y(t)).
\end{split}
\end{equation}
where each individual from $y_j(t)$ dies at a constant rate $d_j$ and that from $y(t)$ at a constant rate $d$.

Taking the derivatives of $y_j(t)$ and $y(t)$, respectively, and combining with (1), we get
\begin{eqnarray*}
\frac{dy_j(t)}{dt} &= &\rho(t,0)-d_jy_j(t)-[1-\tau'(y(t))y'(t)]\rho(t,\tau(y(t))),\\
\frac{dy(t)}{dt} &= &[1-\tau'(y(t))y'(t)]\rho(t,\tau(y(t)))-\rho(t,\infty)-dy(t).
\end{eqnarray*}

It is necessary to note that a prime refers to differentiation with respect to $y$, and a dot indicates differentiation with respect to time $t$, namely, $ d\tau(y(t))/dt=\tau'(y)\cdot y'(t).$

Taking $\rho(t,\infty)$ as zero since no one can live forever.
We assume that the birth rate of predator is $n$ and its functional response function is $f(x(t),y(t))$, where $x(t)$ is the total number of prey, so the term $\rho(t,0)=nf(x(t),y(t))y(t)$. Therefore, for $t\geq {\tau_M}=max\{\tau(y(t))\} $       we obtain
$$\rho(t,\tau(y(t)))=\rho(t-\tau(y(t)),0)e^{-d_j\tau(y)}=nf(x(t-\tau(y),y(t-\tau(y)))y(t-\tau(y))e^{-d_j\tau(y)}.$$
Further we assume that the growth of prey obays logistic growth, then
we have the following  predator-prey model with a state-dependent maturation delay for the predator:
\begin{equation}\label{M2}
\begin{split}
x'(t)=&rx(t)(1-\frac{x(t)}{K})-f(x(t),y(t))y(t),\\
y_j'(t)=&-n(1-\tau'(y)y'(t)) e^{-d_j\tau(y)}f(x(t-\tau(y),y(t-\tau(y)))y(t-\tau(y))\\
&+nf(x(t),y(t))y(t)-d_jy_j(t),\\
y'(t)=&n(1-\tau'(y)y'(t)) e^{-d_j\tau(y)}f(x(t-\tau(y),y(t-\tau(y)))y(t-\tau(y))-dy(t),
\end{split}
\end{equation}
where $r$ and $K$ represent the specific growth rate of the prey and environmental carrying capacity, respectively.

From a biological point of view, it is natural that $t-\tau(y(t))$ is a strictly increasing function of $t$, i.e., the possibility of mature individuals becoming immature only by birth. Assume that $q(s)$ is the developmental proportion at time $s$, when an immature individual moves to the mature state from $t-\tau(y(t))$ to $t$, the cumulative rate of development $q$ should be equal to one, namely,
$$\int_{t-\tau(y(t))}^{t}q(s)ds=1.$$
Taking the derivative with respect to $t$, we have
$$1-\tau'(y(t))y'(t)=\frac{q(t)}{q\big(t-\tau(y(t))\big)}>0,$$
implying that $t-\tau(y(t))$ is a strictly increasing function of $t$ and the variation of maturity time delay $\tau(y(t))$ is bounded by one.

Furthermore, from the biological point of view, we give the following hypotheses for the model (\ref{M2}):

  \begin{itemize}
\item [$(A_1)$] Parameters $r, K, n, d_j, d$  are all positive constants;

\item [$(A_2)$] The state-dependent maturity time delay $\tau (y)$ is an increasing continuously  differentiable bounded function of the mature predator population $y(t)$, i.e.,  $\tau'(y)\geq 0$, and
$0\leq\tau_m\leq\tau(y)\leq\tau_M<+\infty$ with $\tau(+\infty)=\tau_M, \tau(0)=\tau_m.$
\item [$(A_3)$] The functional response function $f(x,y)$  satisfies the following conditions:
\end{itemize}
\begin{itemize}
\item [ (i)]$f(x,y)$ is continuously differentiable; $f(x,y)>0$ for $\forall \ x,y\in(0, \infty)$,  and $f(x,y)=0$ if and only if $x = 0$; $|f_x'(0,0)|<+\infty$, where $f_x'(0,0)$ denotes $\frac{\partial f}{\partial x}(x,y)\big|_{x=0,y=0}$.

\item[(ii)] $f(x,y)$ is increasing of $x$ and decreasing of $y$.

\end{itemize}

In fact, in the literature, most of the commonly used functional response functions $f(x,y)$ satisfies the condition $(A_3)$. We can summarize the forms in the following:
\begin{itemize}
\item [(1)] Linear type \cite{11}: $f(x,y)=bx$;
\item [(2)] Nonlinear type \cite{29}: $f(x,y)=bx^k,~~k>1$; 
\item [(3)] Holling type I (Blackman type): \cite{27}
$f(x,y)=\left\{
\begin{array}{rcl}
ax      &      & {x     \leq    b}\\
ab     &      & {x     >b.     }\\
\end{array}
\right. $
\item [(4)] Holling type II (Michaelis-Menten type) \cite{27}: $f(x,y)=\frac{bx}{1+bhx}$;
\item [(5)] Holling type III \cite{27}: $f(x,y)=\frac{bx^2}{1+bhx^2}$;
\item [(6)] Saturation type \cite{28} $f(x,y)=\frac{bx^k}{1+bhx^k},\ \ k>1$;
\item [(7)] Ivlev type \cite{24}: $f(x,y)=b(1-e^{-cx})$;
\item [(8)] Beddington-DeAngelis type \cite{26,DA}: $f(x,y)=\frac{bx}{1+k_1x+k_2y}$;
\item [(9)] Crowley-Martin type \cite{25}: $f(x,y)=\frac{bx}{1+k_1x+k_2y+k_1k_2xy}$.
\end{itemize}

\section{Positivity and Boundedness}
In this section, we shall address the positivity and boundedness of the solution of system (\ref{M2}). From the standpoint of biology, positivity means that the species persists, i.e., the populations can not be extincted. Boundedness may be viewed as a natural restriction to growth as a result of limited resources in an closed environment.

Denote $C:= C([-\tau_M, 0), R^3)$. For $\phi=(\phi_1,\phi_2,\phi_3)\in C$, define $\Arrowvert\phi\Arrowvert=\sum_{i=1}^3\Arrowvert\phi_i\Arrowvert_\infty$, where
$$\Arrowvert\phi_i\Arrowvert_\infty=max_{\theta\in[-\tau_M,0]}|\phi_i(\theta)|.$$
Then $C$ is a Banach space and $C_+=\{\phi\in C:\phi_i(\theta)\geq0, i\in\{1,2,3\}, \theta\in[-\tau_M,0]\}$ is a normal cone of $C$ with nonempty interior in $C$. The initial conditions for the system (\ref{M2}) are
\begin{equation}\label{M3}
x(t) =\phi_1(t)\geq0,\ \  y_j(t) =\phi_2(t)\geq0,\  \
y(t) =\phi_3(t)\geq0 
 ~~~\mbox{for~all}    -\tau_M\leq t\leq 0,
\end{equation}
with positive $\phi_1(0), \phi_2(0), \phi_3(0)$ and
$$\phi_2(0)=\int_{-\tau(y(0))}^{0}f\big(x(s),y(s)\big)y(s)e^{d_js}ds$$
presenting the size of the immature population surviving to time $t = 0$, where $\tau(y(0))$ is the maturation time at $t = 0$.

The following theorem demonstrates that the solutions of (\ref{M2}) are positive and bounded.

\begin{theorem}\label{positivenewss}
 Assume that the initial condition (\ref{M3}) holds,then every solution\\
$(x(t),y(t),y_j(t))$ of system (\ref{M2}) is positive for all $t>0$.
\end{theorem}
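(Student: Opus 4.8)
The plan is to treat the three components separately, isolating the two structural difficulties of the system—the multiplicative vanishing of $f$ at $x=0$ and the implicit, state-dependent correction term $1-\tau'(y)y'(t)$—and then to propagate nonnegativity of the history forward in time by a method of steps. Throughout I work on the maximal interval of existence and use the shorthand $\tau\equiv\tau(y(t))$.

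First I would dispose of $x(t)$, which is easiest. By $(A_3)(i)$ we have $f(0,y)=0$ for every $y$, and since $f$ is $C^1$ with $|f_x'(0,0)|<\infty$ we may factor $f(x,y)=x\,g(x,y)$ with $g(x,y)=\int_0^1 f_x'(sx,y)\,ds$ continuous. The first equation of (\ref{M2}) then reads $x'(t)=x(t)\big[r(1-x(t)/K)-g(x(t),y(t))y(t)\big]$, so $x(t)=\phi_1(0)\exp\!\big(\int_0^t[\cdots]\,ds\big)$. Since $\phi_1(0)>0$ and the integrand is finite wherever the solution exists, $x(t)>0$ for all $t>0$, and crucially this holds irrespective of the sign of $y$.

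Next I would resolve the implicit structure of the $y$-equation. Solving the third equation of (\ref{M2}) for $y'(t)$ gives
$$y'(t)=\frac{n e^{-d_j\tau}f\big(x(t-\tau),y(t-\tau)\big)y(t-\tau)-d\,y(t)}{1+n\tau'(y)e^{-d_j\tau}f\big(x(t-\tau),y(t-\tau)\big)y(t-\tau)},$$
and the denominator $D(t)$ is strictly positive because $\tau'\ge0$ by $(A_2)$ and $f\ge0$. Writing the production part of the numerator as $P(t)$, this is a linear relation $D(t)y'(t)+d\,y(t)=P(t)$, so variation of constants yields $y(t)=\mu(t)^{-1}\big[\phi_3(0)+\int_0^t\mu(s)P(s)D(s)^{-1}\,ds\big]$ with $\mu(t)=\exp(\int_0^t d\,D(s)^{-1}ds)>0$. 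Positivity of $y$ thus reduces to the nonnegativity of the forcing $P$, i.e. of the delayed values $x(s-\tau)$ and $y(s-\tau)$. To close this I would run a method of steps along $\sigma(t):=t-\tau(y(t))$: the identity $1-\tau'(y(t))y'(t)=q(t)/q(t-\tau(y(t)))>0$ derived in the text shows $\sigma$ is continuous and strictly increasing, hence invertible. Setting $t_0^*=0$ and $t_k^*$ by $\sigma(t_k^*)=t_{k-1}^*$, on each interval $[t_{k-1}^*,t_k^*]$ the delayed argument $\sigma(t)$ lies in the previously treated region where $x,y$ are already nonnegative (the first step uses only the history, where $\phi_1,\phi_3\ge0$); so $P\ge0$ there and the representation gives $y(t)\ge\phi_3(0)/\mu(t)>0$. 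Since $\sigma(t)\ge t-\tau_M$ and each step advances the covered region by $t_k^*-t_{k-1}^*=\tau(y(t_k^*))$, the construction reaches every finite time and positivity propagates to all $t>0$. Finally $y_j$ is recovered from $y_j(t)=n\int_{t-\tau(y(t))}^t f(x(s),y(s))y(s)e^{-d_j(t-s)}\,ds$, which one verifies solves the linear $y_j$-equation and matches the prescribed $\phi_2(0)$; with $x,y>0$ and $f>0$ the integrand is positive, so $y_j(t)>0$.

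The main obstacle I expect is the $y$-component, where two issues must be handled at once: (a) legitimately solving the implicit relation for $y'(t)$, which hinges on the strict positivity of the denominator $D(t)$, and (b) ensuring the forcing $P$ stays nonnegative, which is not automatic because the delay is state-dependent and \emph{a priori} the argument $t-\tau(y(t))$ need not advance. The key to both is the monotonicity of $\sigma(t)=t-\tau(y(t))$ supplied by $1-\tau'(y)y'(t)>0$; without that correction term the step construction would break down, which is exactly where this model departs from the state-independent case.
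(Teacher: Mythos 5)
Your proposal is correct in substance but takes a genuinely different route from the paper's for two of the three components. For $x(t)$ the paper merely asserts positivity, whereas your factorization $f(x,y)=x\,g(x,y)$ with $g(x,y)=\int_0^1 f_x'(sx,y)\,ds$ (legitimate under $(A_3)$(i), since $f(0,y)=0$ and $f$ is $C^1$) plus the exponential representation is an actual proof, independent of the sign of $y$; this fills a gap the paper itself flags. For $y_j(t)$ you and the paper use the same integral representation forced by the compatibility condition in (\ref{M3}) (your survival factor $e^{-d_j(t-s)}$ is the correct sign; the paper's $e^{d_j(t-s)}$ in (\ref{M4}) is a typo). The real divergence is in $y(t)$: the paper argues by contradiction at the \emph{first zero} $t^*$ of $y$ --- on $[0,t^*]$ every delayed argument $t-\tau(y(t))$ lies in $[-\tau_M,t^*]$, where $y\geq 0$ by construction and $x\geq 0$, so the production term is nonnegative, $1-\tau'(y)y'>0$ gives $y'\geq -dy$, and hence $y(t^*)\geq \phi_3(0)e^{-dt^*}>0$, a contradiction. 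You instead solve the implicit relation for $y'$, apply variation of constants, and run a method of steps along $\sigma(t)=t-\tau(y(t))$. Both arguments rest on the same two structural facts (positivity of the correction factor and nonnegativity of the delayed forcing), but yours handles the implicit term honestly --- the paper never explains why $y'\geq -dy$ is even well posed when $y'$ appears on both sides --- and yields the explicit lower bound $y(t)\geq \phi_3(0)\mu(t)^{-1}$.

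One technicality you should repair: your claim that the steps $t_k^*$ ``reach every finite time'' relies on the increments $t_k^*-t_{k-1}^*=\tau(y(t_k^*))$ being bounded below, but $(A_2)$ permits $\tau_m=0$, in which case the $t_k^*$ could accumulate at a finite $T^*$. This is patchable: at such an accumulation point $\sigma(T^*)=T^*$, all delayed arguments for $t\leq T^*$ remain in the already-covered region, so the variation-of-constants bound extends to $T^*$ and the induction continues (alternatively, replace the discrete steps by a supremum/continuation argument). Note that the paper's first-zero device avoids this covering issue entirely, since taking $t^*$ to be the first zero automatically places every delayed argument where $y\geq 0$; also note that your flat assertion that the denominator $D(t)$ is positive ``because $\tau'\geq 0$ and $f\geq 0$'' likewise needs $y(t-\tau)\geq 0$, which is exactly what the induction supplies, as you correctly acknowledge at the end.
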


\begin{proof}
Clearly, $x(t)>0$ for all $t>0$ ({\bf why? seems it's not obvious, better to give some reason}).

Now to prove the positivity of $y(t)$ as a solution of (\ref{M2}). Assume that,there exists $t^*>0$, such that $y(t^*)=0$ and $y(t)>0$ for every $t\in(0,t^*)$. Then by the initial conditions  (\ref{M3}) we have
$
y'(t)\geq-dy~~~ \forall t\in[-\tau_M,t^*].
$
Thus we get
$
y(t)\geq y_0e^{-dt}>0$ for $ \forall t\in[-\tau_M,t^*]
$.
Let $t=t^*$, then we have
$
0=y(t^*)\geq y_0e^{-dt^*}>0
$, 
which is a contradiction. So no such $t^*$ exists, and we obtain the results.

Next we  prove the positivity of $y_j(t)$ as a solution of (\ref{M2}).

Integrating the third equation of the system (\ref{M2}), we have
\begin{equation}\label{M4}
\begin{split}
y_j(t)=&e^{-d_j t}\bigg[\phi_3(0)+\int_{0}^{t} e^{d_j s}nf(x(s),y(s))y(s)ds-\int_{-\tau(y(0))}^{t-\tau(y(t))} nf(x(s),y(s))y(s)e^{d_j s}ds\bigg]\\
=&\int_{t-\tau(y)}^{t}nf(x(s),y(s))y(s)e^{d_j(t-s)}ds.
\end{split}
\end{equation}

By the positivity of $x(t),y(t),\tau(t)$, we have  $y_j(t)>0,\  \forall t>0$. The proof is complete.
\end{proof}

In proving our main results, a comparison principle will be used. As we know, the comparison principles do not always hold for SDTD equations, which depends very much on how the delay term appears in the equations. The comparison principle of SDTD equations without delay time-derivative  term is discussed in \cite{16}. Now we extend the result to the system with delay time-derivative  term.
\begin{lemma}
Let $y_1(t)$ be the solution of
$$y_1'(t)=\big[1-\tau'(y_1)y_1'(t)\big]e^{-d_j\tau(y_1)}f(t-\tau(y_1))-dy_1(t),~~t>0$$
and $y_2(t)$ be a function satisfying
\begin{equation}\label{B1}
y_2'(t)\leq\big[1-\tau'(y_2)y_2'(t)\big]e^{-d_j\tau(y_2)}f(t-\tau(y_2))-dy_2(t),~~t>0
\end{equation}
and  $y_2(s)\leq y_1(s)$ for all $s\in[-\tau_M,0]$, where $f(\cdot)$ is a continuous function. Then   $y_2(t)\leq y_1(t)$ holds true for all $t>0$.
\end{lemma}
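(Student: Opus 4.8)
The plan is to eliminate the implicit, delay-differentiated factor $1-\tau'(y)y'(t)$ by solving each relation for the derivative, thereby reducing the statement to a comparison for a scalar first-order differential inequality. Writing $F_i(t)=e^{-d_j\tau(y_i(t))}f(t-\tau(y_i(t)))$ and using $\tau'\ge0$ together with the positivity of the coefficient $1+\tau'(y_i)F_i$ (automatic in the relevant case $f\ge0$, and coinciding with the model's standing constraint $1-\tau'(y)y'>0$), the defining equation becomes $y_1'(t)=H(t,y_1(t))$ and the inequality \eqref{B1} becomes $y_2'(t)\le H(t,y_2(t))$, where $H(t,u)=\frac{e^{-d_j\tau(u)}f(t-\tau(u))-du}{1+\tau'(u)e^{-d_j\tau(u)}f(t-\tau(u))}$. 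The structural point that tames the delay-derivative term is that, since $f$ is a prescribed continuous function, the state-dependent delay enters $H$ only through $\tau(\cdot)$ evaluated at the \emph{current} state $u$; there is no genuine dependence on the past of $y_i$, so the same ordinary (nonautonomous) right-hand side $H$ governs both $y_1$ and $y_2$. I would note here that $H$ is well defined on $[0,\infty)$ because $f$ is prescribed on $[-\tau_M,\infty)$ and $t-\tau(y(t))\ge-\tau_M$ for $t\ge0$.

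Second, I would run a first-contact argument. Suppose, for contradiction, that $y_2(t)>y_1(t)$ for some $t>0$; since $y_2(0)\le y_1(0)$ and both functions are continuous, there is a first time $t_0\ge0$ with $y_2(t_0)=y_1(t_0)$, $y_2\le y_1$ on $[0,t_0]$, and a sequence decreasing to $t_0$ along which $y_2>y_1$, so that $D^+(y_2-y_1)(t_0)\ge0$. However, evaluating both relations at $t_0$ and using $y_2(t_0)=y_1(t_0)=:\bar y$ gives $H(t_0,y_2(t_0))=H(t_0,y_1(t_0))$, whence $y_2'(t_0)\le H(t_0,\bar y)=y_1'(t_0)$, forcing $y_2'(t_0)=y_1'(t_0)$. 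Thus the curves meet tangentially and no contradiction is produced. I expect this soft contact to be the main obstacle: because the common right-hand side is evaluated at equal states at the contact point, the delay term offers no slack.

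To break the tie I would introduce a strict perturbation. For $\epsilon>0$ let $y_1^\epsilon$ solve $y_1^{\epsilon\prime}=H(t,y_1^\epsilon)+\epsilon$ with $y_1^\epsilon(0)=y_1(0)+\epsilon$. Repeating the first-contact argument for the pair $(y_2,y_1^\epsilon)$, at any putative contact point $t_0$ one now has $y_2'(t_0)\le H(t_0,y_1^\epsilon(t_0))=y_1^{\epsilon\prime}(t_0)-\epsilon<y_1^{\epsilon\prime}(t_0)$, so $y_2-y_1^\epsilon$ is strictly decreasing at $t_0$, contradicting $D^+(y_2-y_1^\epsilon)(t_0)\ge0$; and since $y_2(0)\le y_1(0)<y_1^\epsilon(0)$, the contact cannot occur at $t=0$. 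Hence $y_2(t)\le y_1^\epsilon(t)$ for all $t\ge0$.

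Finally I would let $\epsilon\to0^+$. The family $y_1^\epsilon$ is nonincreasing in $\epsilon$ and bounded below by $y_1$ (by the same comparison applied to two perturbation levels), so it converges; by continuous dependence of solutions on the right-hand side and initial data, $y_1^\epsilon\to y_1$ uniformly on compact time intervals, giving $y_2(t)\le y_1(t)$ for all $t>0$. The one technical caveat to settle last is that identifying the monotone limit of $y_1^\epsilon$ with $y_1$ uses uniqueness of the unperturbed problem $y'=H(t,y)$, which follows from local Lipschitz continuity of $H$ in $u$ — a consequence of the $C^1$-regularity of $\tau$ in $(A_2)$ and the smoothness of the prescribed forcing $f$; if $f$ is merely continuous one instead identifies the limit with the minimal solution and passes to the limit through the integral form of the equation.
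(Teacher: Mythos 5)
Your proposal is correct in substance but takes a genuinely different route from the paper's. The paper keeps the implicit delay-derivative term and runs a first-contact argument directly on the SDTD form: at a putative first contact $t_0$ it uses $y_2'(t_0)\ge y_1'(t_0)$, $\tau'\ge 0$ and (implicitly) $f\ge 0$ to conclude $y_1'(t_0)\ge y_2'(t_0)$ --- which yields only tangency, not a contradiction, so the paper's ``strict'' step is in fact incomplete (the manuscript itself carries an embedded comment questioning the lemma and its proof); it then treats general data by the same $\varepsilon$-perturbation-and-limit device you use. Your key move --- solving both relations for the derivative so the problem becomes the scalar ODE comparison $y_1'=H(t,y_1)$, $y_2'\le H(t,y_2)$ with $H(t,u)=\dfrac{e^{-d_j\tau(u)}f(t-\tau(u))-du}{1+\tau'(u)e^{-d_j\tau(u)}f(t-\tau(u))}$ --- does not appear in the paper, and it buys two things: it makes transparent that the delay here is not a genuine functional dependence on the history (only $y_i(0)$ matters, since $f$ is a prescribed function of time), and it lets you run the contact argument on the strictly perturbed problem, where the added $\epsilon$ supplies exactly the slack whose absence stalls the paper's unperturbed contact argument. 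Note that your division step needs the denominator $1+\tau'(u)e^{-d_j\tau(u)}f(t-\tau(u))$ to be positive, i.e.\ essentially $f\ge 0$; the paper needs the same sign condition in its chain of inequalities, and it holds in every application in the paper, but it is not implied by ``$f$ continuous'' in the statement.

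Two caveats. First, your proof and the paper's share the same unresolved step at $\epsilon\to 0$: identifying $\lim_{\epsilon\to 0}y_1^\epsilon$ with the given solution $y_1$ requires uniqueness for $y'=H(t,y)$, which you correctly reduce to local Lipschitz continuity of $H$ in $u$, but which neither proof can dispense with. Second, your fallback for merely continuous $f$ is backwards: the monotone limit of solutions perturbed upward (in both the vector field and the initial datum) is the \emph{maximal} solution of the unperturbed problem, not the minimal one; so without uniqueness you obtain $y_2\le y_{\max}$, which does not give $y_2\le y_1$ for an arbitrary solution $y_1$ --- indeed, if uniqueness fails the lemma as stated is false (take $y_2$ to be the maximal solution, which satisfies the inequality with equality, and $y_1$ a strictly smaller solution). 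This is a defect of the lemma itself rather than something your argument introduces, but your closing sentence should not claim the merely-continuous case can be salvaged this way.
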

\begin{proof}
{\bf I think the lemma has problem, and the proof}

With the assumption $y_2(s)<y_1(s)$ for all $s\in[-\tau_M,0]$,
first we claim that $y_2(t)<y_1(t)$ for all $t>0$. If this  is false, there must exist some $t_0> 0$ such that $y_2(t)<y_1(t)$, $t\in[-\tau_M,t_0)$ and $y_2(t_0)=y_1(t_0)$. It follows that $y_2'(t_0)\geq y_1'(t_0)$. But
\begin{equation*}
\begin{split}
y_1'(t_0)&=\big[1-\tau'(y_1(t_0))y_1'(t_0)\big]e^{-d_j\tau(y_1(t_0))}f(t-\tau(y_1(t_0)))-dy_1(t_0)\\
&\geq \big[1-\tau'(y_2(t_0))y_2'(t_0)\big]e^{-d_j\tau(y_2(t_0))}f(t-\tau(y_2(t_0)))-dy_2(t_0)\\
&\geq y_2'(t_0)
\end{split}
\end{equation*}

\vspace{0.2in}

For the general case, let $\varepsilon>0$ and $y_\varepsilon(t)$ be the solution of
$$y_1'(t)=\big[1-\tau'(y_1)y_1'(t)\big]e^{-d_j\tau(y_1)}f(t-\tau(y_1))-dy_1(t)+\varepsilon$$
corresponding to initial data $y_\varepsilon(t)=y_1(t)+\varepsilon$, $t\in[-\tau_M,0)$. Following the previous result, we can conclude that $y_\varepsilon(t)>y_2(t)$ for all $t>0$ for which $y_\varepsilon(t)$ is defined. It can be shown that for sufficiently small $\varepsilon>0$, the solution $y_\varepsilon(t)\to y_1(t)$ as $\varepsilon\to0$ for all $t\geq-\tau_M$. Consequently, $y_1(t)=\lim_{\varepsilon\to0}y_\varepsilon(t)\geq y_2(t).$
\end{proof}

For the comparison principles, the other reversed inequality follows analogously, and will be used later. {\bf Not sure what you want to claim for this part: Furthermore, an differential inequality of the form (\ref{B1}), which holds only for t above some value, say $t_1$, and not for all $t>0$, will be often used in applications of these comparison results. That is the initial time is simply thought of as $t_1$ rather than $0$, and $y_2(t)\leq y_1(t)$ is arranged to hold for $t\leq t_1$ by appropriate definition of $y_1(t)$ for values of $t\leq t_1$. In the interests of clarity, this latter case in detail will not be always explained.
}

In the next theorem we give boundedness results.

\begin{theorem}
 Assume that the initial condition (\ref{M3}) holds, then every solution $(x(t),y(t),y_j(t))$ of system $(2)$ is eventually uniformly bounded.
 \end{theorem}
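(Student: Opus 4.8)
The plan is to bypass any direct estimate of the nonlinear recruitment term by exploiting an exact cancellation in the predator equations. First I would bound the prey: since $f(x,y)y\ge 0$, the first equation of (\ref{M2}) gives $x'(t)\le rx(t)\big(1-x(t)/K\big)$, so the standard logistic comparison yields $\limsup_{t\to\infty}x(t)\le K$; in particular $x$ is eventually uniformly bounded. This step is natural but, as will be clear, not even strictly needed for the main estimate.

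The crucial observation is that the two maturation terms in the $y_j$- and $y$-equations are identical up to sign and carry the \emph{same} implicit correction factor $1-\tau'(y)y'(t)$ evaluated at the same time $t$. Hence, writing $W(t):=y_j(t)+y(t)$ for the total predator population and adding the last two equations of (\ref{M2}), the delayed maturation terms annihilate and one is left with the delay-free identity $W'(t)=nf(x(t),y(t))y(t)-d_jy_j(t)-dy(t)$. Next I would combine this with the prey equation by setting $V(t):=nx(t)+y_j(t)+y(t)$: the term $nf(x,y)y$, appearing with opposite signs in $nx'(t)$ and in $W'(t)$, also cancels, leaving $V'(t)=nrx(t)\big(1-x(t)/K\big)-d_jy_j(t)-dy(t)$, whose right-hand side depends only on the current state and involves neither the delay nor the functional response.

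From here the conclusion is routine. Put $\delta:=\min\{d_j,d\}>0$ (positive by $(A_1)$). Using $-d_jy_j-dy\le-\delta(y_j+y)=-\delta V+\delta nx$ and maximizing the resulting downward quadratic $(r+\delta)x-(r/K)x^2$ over $x\ge 0$, I would obtain a differential inequality of the form $V'(t)\le-\delta V(t)+C$ with $C:=n(r+\delta)^2K/(4r)$. A Gronwall/comparison estimate then gives $\limsup_{t\to\infty}V(t)\le C/\delta$, a bound independent of the initial data, whence $x,y_j,y$ are each eventually uniformly bounded, being nonnegative by Theorem \ref{positivenewss}.

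The only genuine subtlety—and the reason the combination $V=nx+y_j+y$ is chosen rather than attempting to bound $y$ directly via the comparison Lemma—is that for several admissible responses allowed by $(A_3)$ (for instance Holling type II, where $f(x,y)=bx/(1+bhx)$ is independent of $y$) the product $f(x,y)y$ is \emph{not} bounded in $y$, so a direct comparison argument for the $y$-equation would stall precisely on estimating the recruitment term. Forming $V$ sidesteps this entirely, since the $f(x,y)y$ contributions cancel before any estimate is needed. The one point requiring care is that the factor $1-\tau'(y)y'(t)$ must be the identical quantity in both predator equations; this holds by the derivation of (\ref{M2}), and since $\tau'(y)\ge 0$ one may solve the implicit relation as $y'(t)=(A-dy)/(1+\tau'(y)A)$ with $A\ge 0$ to confirm that $y'$ is well defined, so the cancellation is legitimate at every $t>0$.
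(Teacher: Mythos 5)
Your proposal is correct and follows essentially the same route as the paper: the paper's proof also forms $V(t)=nx(t)+y(t)+y_j(t)$, exploits exactly the same cancellation of the delayed maturation terms and of $nf(x,y)y$, arrives at $V'(t)\leq-\min\{d_j,d\}V(t)+M$ with $M$ the maximum of the resulting downward quadratic in $x$, and concludes $\limsup_{t\to\infty}V(t)\leq M/\min\{d_j,d\}$, supplemented by the logistic comparison $\limsup_{t\to\infty}x(t)\leq K$. Your added remarks (the explicit constant, the justification that the correction factor $1-\tau'(y)y'(t)$ is identical in both predator equations, and why a direct comparison on $y$ alone would stall) are sound elaborations rather than a different method.
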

\begin{proof}Define a function as follows:
$$V(t)=nx(t)+y(t)+y_j(t).$$
Calculating the time derivative of $V(t)$ along the solutions of system (\ref{M2}), we obtain
\begin{equation*}
\begin{split}
V'(t) & =nrx(t)(1-\frac{x(t)} {K})-dy-d_j y_j\\
& =-\min\{d_j, d\} V+n(\min\{d_j, d\}+r)x-\frac {nr}{K} x^2+(\min\{d_j, d\}-\max\{d_j, d\})y\\
& \leq -\min\{d_j, d\}V+M.
\end{split}
\end{equation*}
where $M>0$ is the maximum of the quadratic function $n(\min\{d_j, d\}+r)x-\frac {nr}{K} x^2$. Therefore, $\limsup_{t \rightarrow \infty}V(t)\leq M/min\{d_j, d\}$ and the solution of system (\ref{M2}) is eventually uniformly bounded.

In particular, according to the first equation of system (\ref{M2}), we have
$$x'(t)\leq rx(t)\big(1-\frac{x(t)}{K}\big)$$
By comparison principle\cite{CP}, we can get
$$\limsup_{t\to\infty} x(t)\leq K$$
The proof is complete.
\end{proof}

\begin{remark}
Due to the existence of the delay time-derivative  term $1-\tau'(y(t))y'(t)$, the method of proving boundedness in \cite{6} is not applicable. Here, the method of constructing the $V$ function is used to subtly prove the eventually boundedness of $x(t),y(t),y_j(t)$.
\end{remark}

\section{Permanence and Extinction}
It is clear that system (\ref{M2}) has a trivial equilibrium $E_0(0,0,0)$ and a predator extinction equilibrium $E_1(K,0,0)$. Mathematically, permanence is equivalent to the existence of positive equilibrium$E^*(x^*,y^*,y_j^*)$. The following results give the necessary and sufficient conditions for the permanence/extinction of the system (\ref{M2}).

\begin{theorem} \label{TH1} 
System (\ref{M2}) is permanent if and only the $ne^{-d_j\tau(0)}f(K,0)> d$ holds.
\end{theorem}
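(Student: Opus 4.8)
The plan is to treat the two implications separately, noting first that since $(A_2)$ gives $\tau(0)=\tau_m$, the stated threshold $R_0:=ne^{-d_j\tau_m}f(K,0)/d$ is exactly the predator's basic reproduction number at the prey-only state $E_1=(K,0,0)$: a newborn predator matures with survival probability $e^{-d_j\tau_m}$, then reproduces at per-capita rate $nf(K,0)$ over an expected lifespan $1/d$. Permanence should hold precisely when $R_0>1$.

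Necessity I would prove by contraposition: assuming $ne^{-d_j\tau_m}f(K,0)\le d$, I would show the predator goes extinct, so the system cannot be permanent. By the boundedness theorem $\limsup_{t\to\infty}x(t)\le K$, so for any $\epsilon>0$ we have $x(t)<K+\epsilon$ for $t$ large. Using the monotonicity in $(A_3)(ii)$ ($f$ increasing in $x$, decreasing in $y$) together with $y\ge0$, the recruitment term is dominated by $n\sigma'(t)e^{-d_j\tau(y)}f(K+\epsilon,0)\,y(t-\tau(y))$, where $\sigma(t):=t-\tau(y(t))$ and $\sigma'(t)=1-\tau'(y)y'(t)>0$ by the construction in Sect.~2. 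The comparison Lemma then bounds $y$ above by the solution $u$ of the corresponding scalar equation with equality. To show $u\to0$ I would integrate the $u$-equation over $[t_1,t_2]$ and handle the troublesome correction factor by the substitution $s=\sigma(t)$, under which $\sigma'(t)\,dt=ds$ and $e^{-d_j\tau}\le e^{-d_j\tau_m}$; this yields
\[
u(t_2)-u(t_1)\le\big(ne^{-d_j\tau_m}f(K+\epsilon,0)-d\big)\!\int_{t_1}^{t_2}\!u\,dt+C,
\]
with $C$ a bounded remainder coming from the initial segment. Choosing $\epsilon$ small enough that the bracket is negative (possible when $R_0<1$ strictly) forces $\int^{\infty}u<\infty$, hence $u\to0$ and $y\to0$. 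The marginal case $ne^{-d_j\tau_m}f(K,0)=d$ I would dispatch separately by showing no positive equilibrium exists: any $E^*$ satisfies $ne^{-d_j\tau(y^*)}f(x^*,y^*)=d$ with $0<x^*<K$ (the prey balance forces $x^*<K$), and the chain $d=ne^{-d_j\tau(y^*)}f(x^*,y^*)\le ne^{-d_j\tau_m}f(K,0)=d$ can hold only if $f$ is flat in $x$ on $[x^*,K]$, contradicting strict monotonicity of the response.

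For sufficiency I would combine the upper bounds already available from the boundedness theorem with a uniform-persistence argument supplying the lower bounds. Interpreting $(\ref{M2})$ as a semiflow on $C_+$, the boundary invariant sets are the equilibria $E_0$ and $E_1$: on $\{y\equiv0\}$ the prey obeys the logistic law and $y_j$ decays, so the only boundary $\omega$-limits are $E_0$ and $E_1$, and $\{E_0,E_1\}$ is an acyclic covering (the only boundary connection is $E_0\to E_1$). I would then show each is a uniform weak repeller for the interior $\{x>0,\,y>0\}$. Near $E_0$ the prey grows since $x'\approx rx>0$, so no interior orbit converges to $E_0$; near $E_1$ the linearized predator equation is $y'(t)\approx ne^{-d_j\tau_m}f(K,0)\,y(t-\tau_m)-dy(t)$, whose characteristic equation $\lambda=ne^{-d_j\tau_m}f(K,0)e^{-\lambda\tau_m}-d$ has a positive real root exactly when $R_0>1$, so predators grow when rare and no interior orbit converges to $E_1$. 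The uniform persistence theorem then gives $\eta>0$ with $\liminf_{t\to\infty}x(t)\ge\eta$ and $\liminf_{t\to\infty}y(t)\ge\eta$; the bound $\liminf y_j>0$ follows from the integral representation $y_j(t)=\int_{t-\tau(y)}^{t}nf(x(s),y(s))y(s)e^{d_j(t-s)}\,ds$ once $x,y$ are bounded below. Together with dissipativity this is permanence, and existence of $E^*$ follows from the general principle that a dissipative, uniformly persistent semiflow possesses an interior equilibrium.

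I expect the sufficiency lower bound to be the main obstacle. The delicate point is making the weak-repeller property near $E_1$ rigorous for a state-dependent delay whose derivative $\tau'(y)y'$ enters the equation: one must justify that the nonlinear correction factor $1-\tau'(y)y'$ stays close to $1$ when $y$ is small (so the linearization at $E_1$ is legitimate) and verify the functional-analytic hypotheses of the persistence theorem (continuity and asymptotic smoothness of the SDTD semiflow, existence of a global attractor) in a setting where standard smoothness theory for constant-delay equations does not directly apply. A secondary technical nuisance, already flagged above, is the strict monotonicity needed in the necessity argument; if $(A_3)(ii)$ is only non-strict one must restrict to responses strictly increasing in $x$, which covers all the examples $(1)$–$(9)$.
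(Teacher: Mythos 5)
Your sufficiency skeleton (Hale--Waltman acyclicity theorem, boundary equilibria $E_0$ and $E_1$, weak-repeller property for each) is exactly the paper's, but at the decisive step --- showing that no interior orbit converges to $\widetilde{E_1}$ --- you propose linearization at $E_1$ and explicitly leave its justification open. That unresolved point is not a technicality; it is the actual content of the proof, and the paper fills it with an idea absent from your proposal: the Lyapunov-like functional $V(t)=y(t)+d\int_{t-\tau(y(t))}^{t}y(s)\,ds$. Along solutions of (\ref{M2}) the boundary terms of the integral combine with the recruitment term so that the troublesome correction factor cancels structurally, giving
\begin{equation*}
V'(t)=y\big(t-\tau(y)\big)\big(1-\tau'(y)y'(t)\big)\Big[ne^{-d_j\tau(y)}f\big(x(t-\tau(y)),y(t-\tau(y))\big)-d\Big],
\end{equation*}
which is strictly positive once $x(t)>K-\varepsilon$, $y(t)<\varepsilon$ and $ne^{-d_j\tau(\varepsilon)}f(K-\varepsilon,\varepsilon)>d$; hence $V$ increases, forcing $y(t)\to\infty$ and contradicting $y(t)\to 0$. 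This makes the factor $1-\tau'(y)y'$ harmless rather than something to be controlled. Your linearization route, by contrast, would need the repeller property of the \emph{nonlinear} SDTD semiflow to follow from a positive characteristic root, i.e.\ differentiability of the semiflow in initial data or an unstable-manifold theorem, neither of which is available off the shelf for state-dependent delays (the paper itself only uses the heuristic ``freezing the delay'' linearization, and only for the local stability analysis of Section 5, never for persistence). So as written, sufficiency has a genuine gap precisely where you predicted the difficulty would be.

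Your necessity argument is genuinely different from the paper's: the paper reuses the same functional $V$ (Claim B), splitting into the cases ``$x(t)>K$ for all $t$'' and ``$x(t_0)<K$ for some $t_0$,'' whereas you use comparison with a scalar SDTD equation plus an integral estimate via the substitution $s=t-\tau(y(t))$. That route is plausible for the strict inequality $ne^{-d_j\tau(0)}f(K,0)<d$ and has the merit of quantifying extinction through $\int^\infty u\,dt<\infty$. Two caveats, though: (a) it leans on the paper's comparison Lemma, whose stated hypothesis (the delayed forcing $f(\cdot)$ is a given function of time) does not literally cover your scalar equation in which the solution's own delayed value appears, so you would have to reprove the comparison in that form; and (b) your treatment of the marginal case $ne^{-d_j\tau(0)}f(K,0)=d$ requires $f$ strictly increasing in $x$, which fails for Holling type I (example (3): $f\equiv ab$ for $x>b$), so the claim that strictness ``covers all the examples'' is false --- there you would need strict monotonicity of $\tau$ instead, or simply handle the equality case with the paper's functional $V$, which covers it uniformly.
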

To prove this theorem, we engage the persistence theory by Hale and Waltmann \cite{19} for infinite dimensional systems (see \cite{20} as well), in the following:

Consider a metric space X with metric d. T is a continuous semiflow on X,i.e., a continuous mapping $T: [0,\infty)\times X\to X$ with the following properties:
$$T_t\circ T_s=T_{t+s},~~~~t,s\geq0;~~~~~T_0(x)=x,~~x\in X.$$
Here $T_t$ denotes the mapping from X to X given by $T_t(x)=T(t,X)$. The distance $d(x,Y)$ of a point $x\in X$ from a subset Y of X is defined by
$$d(x,Y)=\inf_{y\in Y} d(x,y).$$
Recall that the positive orbit $\gamma^+(x)$ through $x$ is defined as $\gamma^+(x)=\cup_{t\geq0} \{ T(t)x\}$, and its $\omega$-limit set $\omega (x)= \cap_{\tau \geq 0} CL \cup_{t\geq \tau } \{ T(t)x\}$, where CL means closure.  Define $W^s(A)$ the stable set of an compact invariant set A as
$$W^s(A)=\{ x: x\in X,\omega (x)\neq \phi, \omega(x)\subset A\};$$
 define $\widetilde{A_\partial}$ the particular invariant set of interest as
 $$\widetilde{A_\partial} = \bigcup_{x\in A} \omega (x).$$

\begin{lemma}(\cite{19})
Suppose T(t) satisfies $(H_1)$:

$(H_1)$. Assume X is the closure of open set $X^0$; $\partial X^0$ is nonempty and is the boundary of $X^0$; and the $C^0$-semigroup $T(t)$ on X satisfies
$$T(t): X^0\times X^0,~~~~~T(t): \partial X^0\to \partial X^0.$$

and
\begin{itemize}
\item [(i)] there is a $t_0\geq0$ such that $T(t)$ is compact for $t>t_0$;
\item [(ii)] T(t) is point dissipative in X;
\item [(iii)] $\widetilde{A_\partial}$ is isolated and has an acyclic covering M.
\end{itemize}
Then $T(t)$ is uniformly persistent iff for each $M_i\in M$, $W^s(M_i)\cap X^0=\emptyset$.
\end{lemma}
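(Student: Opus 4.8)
The statement is the abstract Hale--Waltman uniform persistence theorem, so my plan is to reproduce its proof in the metric-space setting of hypothesis $(H_1)$, treating the two implications of the ``iff'' separately. The easy (``only if'') direction is a direct consequence of the definitions: if $T(t)$ is uniformly persistent there is $\eta>0$ with $\liminf_{t\to\infty} d(T(t)x,\partial X^0)\ge \eta$ for every $x\in X^0$; were some $M_i\subset \widetilde{A_\partial}\subset\partial X^0$ to satisfy $W^s(M_i)\cap X^0\ne\emptyset$, any $x$ in that intersection would have $\omega(x)\subset M_i$, hence $T(t)x\to\partial X^0$ and $\liminf_{t\to\infty} d(T(t)x,\partial X^0)=0$, contradicting persistence. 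I would dispose of this direction in a few lines.

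The substance is the ``if'' direction. First I would use hypotheses (i) and (ii)---eventual compactness and point dissipativity---to invoke the standard dissipative-systems machinery and obtain a compact global attractor $A$ for $T(t)$ on $X$, and, restricting to the forward-invariant boundary, a compact global attractor $A_\partial$ for $T(t)$ on $\partial X^0$, so that $\widetilde{A_\partial}=\bigcup_{x\in A_\partial}\omega(x)$ is well defined and every $\omega(x)$ lies in $A$. The argument then proceeds in two stages: weak persistence, followed by its upgrade to uniform persistence.

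Stage one (weak persistence). I would argue by contradiction. If the system fails to be weakly uniformly persistent there is $y\in X^0$ with $d(T(t)y,\partial X^0)\to 0$, so the nonempty compact invariant set $\omega(y)$ lies in $\partial X^0$, hence in the boundary attractor, and therefore meets $\widetilde{A_\partial}=\bigcup_i M_i$. If $\omega(y)$ were not contained in a single $M_i$, the Butler--McGehee lemma (valid for isolated invariant sets under a compact semiflow, which is precisely why the isolatedness in (iii) is needed) would let me extract points of $\omega(y)$ in the stable and unstable sets of the $M_i$ it meets and chain them into a cycle $M_{i_1}\to\cdots\to M_{i_\ell}\to M_{i_1}$, contradicting the acyclicity assumed in (iii). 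Hence $\omega(y)\subset M_i$ for a single $i$, so $y\in W^s(M_i)\cap X^0$, contradicting the hypothesis.

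Stage two (weak $\Rightarrow$ uniform). Here I would appeal to the theorem that, for a point-dissipative, eventually compact semiflow possessing a global attractor, weak uniform persistence implies uniform (strong) persistence. This is the step I expect to be the main obstacle, since it is where the infinite-dimensional compactness hypotheses are genuinely used: one must control entire sequences of orbits, not just individual ones, and bound the $\liminf$ distance below uniformly in the initial data. The clean route is to replace the boundary distance by the generalized distance function $p(x)=d(x,\partial X^0)$, check that $p$ is continuous with $\{p>0\}=X^0$ forward invariant, and apply the abstract practical-persistence result that converts weak $p$-persistence into uniform $p$-persistence on the attractor. Combining stages one and two yields uniform persistence and closes the ``if'' direction.
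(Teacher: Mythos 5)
This lemma is stated in the paper without proof---it is quoted directly from Hale and Waltman \cite{19} and invoked as a black box in the proof of Theorem 3---so there is no in-paper argument to compare against. Your sketch faithfully reproduces the structure of the original proof in \cite{19}: the trivial ``only if'' direction, then sufficiency established in two stages, namely weak persistence via the Butler--McGehee lemma together with the isolatedness/acyclicity hypothesis (iii), followed by the dissipativity-and-compactness upgrade from weak to uniform persistence, which is exactly how the cited source proceeds.
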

To prove the theorem (\ref{TH1}), first we have the following claims.
\begin{claim}
{\bf A}: If $ne^{-d_j\tau(0)}f(K,0)> d$, then the system (\ref{M2}) is permanent.
\end{claim}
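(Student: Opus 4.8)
The plan is to apply the Hale--Waltman uniform persistence theorem (the Lemma quoted above) in the state space $X = C_+$, taking the interior to be $X^0 = \{\phi \in C_+ : \phi_1(0) > 0,\ \phi_3(0) > 0\}$ and the boundary $\partial X^0 = X \setminus X^0$. By Theorem~\ref{positivenewss} the interior is positively invariant, and since $y \equiv 0$ keeps the predator absent (and then $y_j \equiv 0$ through the integral representation (\ref{M4})), the boundary $\partial X^0$ is positively invariant as well; this supplies the invariance requirement of $(H_1)$. Point dissipativity (condition (ii)) is immediate from the preceding boundedness theorem, and compactness of $T(t)$ for $t > \tau_M$ (condition (i)) follows from the smoothing property of the solution map of the delay system together with the established uniform bounds, using that $t - \tau(y(t))$ is strictly increasing.

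Next I would analyze the dynamics on the boundary. On $\partial X^0$ the predator equation forces $y(t) \to 0$, after which the prey obeys the logistic equation $x'(t) = rx(t)(1 - x(t)/K)$, whose only equilibria are $x = 0$ and $x = K$. Hence the only invariant sets in $\widetilde{A_\partial}$ are $E_0 = (0,0,0)$ and $E_1 = (K,0,0)$, joined by the heteroclinic orbit $E_0 \to E_1$. The resulting covering $M = \{E_0, E_1\}$ is acyclic and each element is isolated, which verifies (iii). By the Lemma, permanence reduces to checking $W^s(E_0) \cap X^0 = \emptyset$ and $W^s(E_1) \cap X^0 = \emptyset$.

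The first is routine: near $E_0$ the prey grows like $x'(t) \approx rx(t) > 0$, so no interior orbit can converge to $E_0$. The decisive step, and the place where the hypothesis enters, is $W^s(E_1) \cap X^0 = \emptyset$. Suppose for contradiction that an interior solution satisfies $(x(t), y_j(t), y(t)) \to E_1$. Then for any small $\epsilon > 0$, eventually $x(t - \tau(y)) > K - \epsilon$ and $y(t) < \epsilon$, so by $(A_3)$ and continuity of $\tau$ we have $f(x(t-\tau(y)), y(t-\tau(y))) > f(K,0) - \epsilon$, $\tau(y) < \tau(0) + \epsilon$, and $1 - \tau'(y)y'(t) > 1 - \epsilon$ (since $y'(t) \to 0$ along the convergent orbit). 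Substituting into the predator equation yields, for all large $t$,
$$y'(t) \geq (1-\epsilon)\,e^{-d_j(\tau(0)+\epsilon)}\,n\,(f(K,0)-\epsilon)\,y(t - \tau(y)) - d\,y(t).$$
Comparing, via the reversed form of the comparison Lemma, with the linear delay equation $u'(t) = A_\epsilon\,u(t-\tau) - d\,u(t)$, whose characteristic equation $\lambda = A_\epsilon e^{-\lambda \tau} - d$ has a positive real root whenever $A_\epsilon > d$, and noting that the coefficient $A_\epsilon \to n\,e^{-d_j\tau(0)}f(K,0) > d$ as $\epsilon \to 0$, I would conclude that $y(t)$ is bounded below by an exponentially growing solution. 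This contradicts $y(t) \to 0$, so no interior orbit converges to $E_1$, and uniform persistence follows.

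The main obstacle is making the invasion estimate near $E_1$ fully rigorous in the presence of the delay time-derivative correction $1 - \tau'(y)y'(t)$: one must justify that $y'(t) \to 0$ along any orbit approaching $E_1$ (so the correction factor is genuinely close to $1$) and control the state dependence of $\tau$ inside the delayed argument, which is exactly where the comparison principle established earlier is needed to pass from the nonlinear inequality to a tractable constant-coefficient, constant-delay problem. A secondary technical point is the smoothing and compactness of the solution semiflow for this state-dependent delay system, for which the strict monotonicity of $t - \tau(y(t))$, guaranteed by $1 - \tau'(y)y'(t) > 0$, is the key structural input.
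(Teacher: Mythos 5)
Your overall skeleton (Hale--Waltman persistence theorem, identification of the boundary invariant sets corresponding to $E_0$ and $E_1$, acyclicity, and reduction to $W^s(E_i)\cap X^0=\emptyset$) coincides with the paper's. The genuine gap is in the decisive step $W^s(E_1)\cap X^0=\emptyset$. You propose to pass from the inequality
$$y'(t)\geq (1-\epsilon)\,n e^{-d_j(\tau(0)+\epsilon)}\big(f(K,0)-\epsilon\big)\,y\big(t-\tau(y)\big)-dy(t)$$
to a constant-coefficient, constant-delay linear equation ``via the reversed form of the comparison Lemma'' and then invoke a positive characteristic root. But the comparison lemma available in the paper (Lemma 1) compares a sub/supersolution of a state-dependent-delay equation with a solution of the \emph{same} equation --- same $\tau(\cdot)$, same delayed forcing; it does not bridge an inequality whose delayed argument is $t-\tau(y(t))$ to a fixed-delay equation $u'(t)=A_\epsilon u(t-\tau)-du(t)$. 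Since $\tau(y(t))$ rides on the unknown solution, one cannot order $y(t-\tau(y(t)))$ against $u(t-\tau)$ without monotonicity information about $y$ that you do not have (an orbit converging to $E_1$ need not be eventually monotone). So the tool you point to does not perform the very step where the hypothesis $ne^{-d_j\tau(0)}f(K,0)>d$ enters --- exactly the difficulty the authors flag when they remark that the comparison principle cannot be used directly because of the term $1-\tau'(y)y'(t)$. The paper sidesteps all of this with the Lyapunov-like functional
$$V(t)=y(t)+d\int_{t-\tau(y)}^{t}y(s)\,ds,$$
whose derivative along solutions collapses to
$$V'(t)=y\big(t-\tau(y)\big)\big(1-\tau'(y)y'(t)\big)\Big[ne^{-d_j\tau(y)}f\big(x(t-\tau(y)),y(t-\tau(y))\big)-d\Big]>0$$
once $x(t)>K-\varepsilon$ and $y(t)<\varepsilon$; since $V$ is then strictly increasing while $y(t)\to 0$ would force $V(t)\to 0$, the contradiction is immediate, with no need to prove $y'(t)\to 0$, to linearize, or to compare different delay structures.

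A secondary point: your treatment of $W^s(E_0)$ (``near $E_0$ the prey grows like $rx$'') silently requires hypothesis $(A_3)$(i), $|f_x'(0,0)|<+\infty$. The paper writes $\frac{d\ln x}{dt}=r(1-x/K)-f(x,y)y/x$ and uses $f(x,y)y/x\to 0$ as $(x,y)\to(0,0)$ to conclude $\frac{d\ln x}{dt}>r/2$; without that assumption the predation term could in principle dominate $rx$ near the origin. If you replace your linear-comparison step at $E_1$ by the functional $V$ above and make the $E_0$ estimate explicit, your argument becomes essentially the paper's proof.
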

\begin{proof}
We begin by showing the claim holds true for the
system (\ref{M5}),
\begin{equation}\label{M5}
\begin{split}
x'(t) &=rx(t)(1-\frac{x(t)}{K})-f(x(t),y(t))y(t),\\
y'(t) &=n(1-\tau'(y)y'(t)) e^{-d_j\tau(y)}f[x(t-\tau(y),y(t-\tau(y))]y(t-\tau(y))-dy(t),
\end{split}
\end{equation}
which is a subsystem of system (\ref{M2}).
As the first step, we verify that the boundary plan $R_+^2=\{(x,y): x\geq0,y\geq0\}$ repel the positive solutions to systems (\ref{M5}) uniformly.

Let $C^{+}([-\tau_M,0],R_+^2)$ denote the space of continuous functions mapping  $[-\tau_M,0]$ into $R_+^2$.  We choose
\begin{equation*}
\begin{split}
C_1=\{(\phi_1,\phi_2)\in C^+( [-\tau_M,0], R_+^2): \phi_1(\theta) \equiv 0,\phi_2(\theta)\geq 0, \theta\in[-\tau_M,0]\},\\
C_2=\{(\phi_1,\phi_2)\in C^+( [-\tau_M,0], R_+^2): \phi_1(\theta) >0,\phi_2(\theta) \equiv 0, \theta\in[-\tau_M,0]\}.
\end{split}
\end{equation*}
Denote  $C=C_1  \cup C_2$, $X=C^+( [-\tau_M,0], R_+^2)$, and $X_0=Int C^+( [-\tau_M,0], R_+^2)$; then $C=\partial X^0$. It is easy to see that the system (\ref{M2}) possesses two constant solutions in $C=\partial X^0$: $\widetilde{E_0}\in C_1, \widetilde{E_1}\in C_2$ with
\begin{equation*}
\begin{split}
\widetilde{E_0}=&\{(\phi_1,\phi_2)\in C^+( [-\tau_M,0], R_+^2): \phi_1(\theta)=\phi_2(\theta)\equiv 0,\theta\in[-\tau_M,0]\},\\
\widetilde{E_1}=&\{(\phi_1,\phi_2)\in C^+( [-\tau_M,0], R_+^2): \phi_1(\theta)\equiv K,\phi_2(\theta)\equiv 0, \theta\in[-\tau_M,0]\}.
\end{split}
\end{equation*}
We verify below that the conditions of Lemma $2$ are satisfied. By the definition of $X^0$ and $\partial X^0$, it is easy to see that condition (i) and (ii) of Lemma $2$ are satisfied for the system (\ref{M5})and that $X^0$ and $\partial X^0$ are invariant. Hence $(H_1)$ is satisfied.

Consider condition (iii) of Lemma $2$.  We have
$$x'(t)|_{(\phi_1,\phi_2)\in C_1}\equiv0,$$
thus  $x(t)|_{(\phi_1,\phi_2)\in C_1}\equiv0$ for all $t\geq 0$. Consequently
we have
$y'(t)|_{(\phi_1,\phi_2)\in C_1}=-dy(t)\leq0,$
implying all the points in $C_1$ approach $\widetilde{E_0},$ i.e., $C_1=W^s(\widetilde{E_0}).$ Similarly we have
$C_2=W^s(\widetilde{E_1}).$ Hence $\widetilde{A_\partial}=\widetilde{E_0}\cup \widetilde{E_1}$ and clearly it is isolated. Noting that $C_1\cap C_2=\emptyset$, it follows from these structural features that the flow in $\widetilde{A_\partial}$ is acyclic, satisfying condition (iii) in Lemma $2$.

Now we show that $W^s(\widetilde{E_i})\cap X^0=\emptyset, (i=0,1).$ By Theorem (\ref{positivenewss}), we have $x(t),y(t)>0$ for all $t>0$. Assume $W^s(\widetilde{E_0})\cap X^0\neq \emptyset$, i.e., there exists a positive solution $(x(t),y(t))$ with $\lim_{t\rightarrow \infty}(x(t),y(t))=(0,0)$, by $(A3)$-(i), we have $\lim_{x\to0,y\to0}\frac{f(x(t),y(t))}{x(t)}=f'_x(0,0)<+\infty$. Thus $\lim_{x\to0,y\to0}\frac{f(x(t),y(t))y(t)}{x(t)}=0$. Then from the first equation of (6), we have
$$\frac{d(ln x(t))}{dt}=r(1-\frac{x(t)}{K})-\frac{f(x(t),y(t))y(t)}{x(t)}>\frac{r}{2}$$
for all sufficiently large $t$. Hence we have $\lim_{t \rightarrow \infty}x(t)=+\infty,$ contradicting $\lim_{t \rightarrow \infty}x(t)=0$; this proves $W^s(\widetilde{E_0})\cap X^0=\emptyset$.

Now we verify $W^s(\widetilde{E_1})\cap X^0=\emptyset$. If it is not the case,
i.e., $W^s(\widetilde{E_1})\cap X^0\neq \emptyset$. Then there exists a positive solution $(x(t),y(t))$ to system (6) with $\lim_{t\rightarrow \infty}(x(t),y(t))=(K,0)$, and for sufficiently small positive constant $\varepsilon$ with $ne^{-d_j\tau(\varepsilon)}f(K-\varepsilon,\varepsilon)>d$, there exists a positive constant $T=T(\varepsilon)$ such that
$$x(t)>K-\varepsilon>0, y(t)<\varepsilon~~~\mbox{for~~all}~~ t\geq T.$$
Consider the function
$$V=y(t)+d\int_{t-\tau(y)}^{t} y(s)ds.$$
Based on hypotheses $A_3$, we have
\begin{equation*}
\begin{split}
\frac{dV}{dt}|_{(2)}=&ne^{-d_j\tau(y)}(1-\tau'(y)y'(t))f[x(t-\tau(y)),y(t-\tau(y))]y(t-\tau(y))-dy(t)\\
&+d\big[y(t)-y(t-\tau(y))(1-\tau'(y)y'(t))\big]\\
=&y(t-\tau(y))(1-\tau'(y)y'(t))\big[ne^{-d_j\tau(y)}f[x(t-\tau(y)),y(t-\tau(y))]-d\big]\\
>&y(t-\tau(y))(1-\tau'(y)y'(t))\big[ne^{-d_j\tau(\varepsilon)}f(K-\varepsilon,\varepsilon)-d\big]\\
>&0.
\end{split}
\end{equation*}
Hence we have $\lim_{t \rightarrow \infty}y(t)=+\infty,$ contradicting with $\lim_{t \rightarrow \infty}y(t)=0$, this proves ${W^s(\widetilde{E_1}}\cap X^0)=\emptyset$.

Therefore we know that the system (\ref{M5}) satisfies all the conditions in Lemma 1, thus $(x(t),y(t))$ is uniformly persistent, i.e., there exists positive constants $\varepsilon$ and $T=T(\varepsilon)$ such that $x(t),y(t)\geq\varepsilon$ for all $t\geq T$; In addition, from Theorem $2$,we have that $(x(t),y(t))$ is eventually bounded, implying the permanence of the system (\ref{M5}). Obviously $y_j(t)$ is permanent from (\ref{M4}), so the permanence of system (\ref{M2}) is straightforward.
\end{proof}

The following claim is need for the proof of necessity.

\begin{claim}
{\bf B}: $\lim_{t \rightarrow \infty}(x(t),y(t),y_j(t))=(K,0,0)$ holds true iff $ne^{-d_j\tau(0)}f(K,0)\leq d$.
 \end{claim}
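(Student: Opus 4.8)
The plan is to prove the two implications separately, treating necessity as an essentially immediate consequence of Claim~A and devoting the bulk of the work to sufficiency. I will use throughout that $\tau(y)\ge\tau(0)=\tau_m$ by $(A_2)$, so that $e^{-d_j\tau(y)}\le e^{-d_j\tau(0)}$, and that the correction factor $1-\tau'(y)y'(t)>0$ by the construction in Section~2.

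For necessity I argue by contraposition via Claim~A. Suppose $ne^{-d_j\tau(0)}f(K,0)>d$. Then Claim~A yields permanence of system (\ref{M2}), so there exist $\varepsilon>0$ and $T$ with $y(t)\ge\varepsilon$ for all $t\ge T$; this contradicts $y(t)\to 0$, which is forced by the assumed convergence $(x,y,y_j)\to(K,0,0)$. Hence convergence to $(K,0,0)$ forces $ne^{-d_j\tau(0)}f(K,0)\le d$.

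For sufficiency, assume $ne^{-d_j\tau(0)}f(K,0)\le d$. Theorem~2 already gives $\limsup_{t\to\infty}x(t)\le K$, so for each small $\varepsilon>0$ there is $T$ with $x(t)\le K+\varepsilon$ for $t\ge T$. The core step is to prove $y(t)\to 0$, for which I reuse the functional $V(t)=y(t)+d\int_{t-\tau(y(t))}^{t}y(s)\,ds$ from Claim~A, whose derivative along (\ref{M2}) is
\[
V'(t)=\bigl(1-\tau'(y)y'(t)\bigr)\,y(t-\tau(y))\bigl[\,ne^{-d_j\tau(y)}f(x(t-\tau(y)),y(t-\tau(y)))-d\,\bigr].
\]
Using $(A_3)(\mathrm{ii})$ ($f$ increasing in $x$, decreasing in $y$), $y(t-\tau(y))\ge 0$, and $\tau(y)\ge\tau(0)$, the bracket is bounded above by $ne^{-d_j\tau(0)}f(K+\varepsilon,0)-d$ for large $t$; together with positivity of the correction factor this gives $V'(t)\le 0$ eventually, so $V$ is non-increasing and bounded below by $0$ and therefore converges. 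Integrating $V'$ and invoking a Barbalat/fluctuation argument (with uniform continuity of $y$ supplied by the bounds of Theorems~1 and 2) then yields $y(t-\tau(y))\to 0$, hence $y(t)\to 0$. Once $y(t)\to 0$, the remaining limits follow: since $f(x,y)\le f(K+\varepsilon,0)$ is bounded on the relevant range, $f(x,y)y\to 0$, so the prey equation is an asymptotically autonomous perturbation of $x'=rx(1-x/K)$; comparison from below gives $\liminf x\ge K$, which with $\limsup x\le K$ yields $x(t)\to K$. Finally the representation (\ref{M4}), $y_j(t)=\int_{t-\tau(y)}^{t}nf(x,y)y\,e^{d_j(t-s)}\,ds$, has integrand tending to $0$ over an interval of length at most $\tau_M$, so $y_j(t)\to 0$, completing $(x,y,y_j)\to(K,0,0)$.

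The main obstacle is the borderline case $ne^{-d_j\tau(0)}f(K,0)=d$: here the crude bound $ne^{-d_j\tau(0)}f(K+\varepsilon,0)-d$ can be strictly positive when $f$ is strictly increasing in $x$, so $x\le K+\varepsilon$ no longer forces $V'\le 0$. I expect to handle this either by sharpening the estimate — exploiting that $\limsup x\le K$ holds \emph{exactly} and that $f$ is strictly decreasing in $y$, so $f(x,y)<f(x,0)$ whenever $y>0$, and then letting $\varepsilon\to 0$ — or by a LaSalle-type invariance argument on the $\omega$-limit set, where $V'\equiv 0$ forces $y\equiv 0$. The secondary technical difficulty is making the passage from "$V$ non-increasing" to "$y\to 0$" fully rigorous in the presence of the state-dependent delay and the time-derivative correction factor $1-\tau'(y)y'(t)$, which obstructs a direct application of the classical LaSalle principle and instead requires the uniform continuity and boundedness estimates established earlier.
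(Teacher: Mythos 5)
Your necessity argument (contraposition via Claim A and permanence) is sound, and is essentially equivalent to the paper's, which instead notes that $ne^{-d_j\tau(0)}f(K,0)>d$ produces a positive equilibrium, itself a solution that does not converge to $(K,0,0)$; either route works. The genuine gap is in your sufficiency direction, and it is exactly the one you flag: the borderline case $ne^{-d_j\tau(0)}f(K,0)=d$. Your argument rests on the asymptotic bound $\limsup_{t\to\infty}x(t)\le K$, which at finite times only ever gives $x(t)\le K+\varepsilon$, so the bracket in $V'$ is only controlled by $ne^{-d_j\tau(0)}f(K+\varepsilon,0)-d$; this is strictly positive in the borderline case whenever $f$ is strictly increasing in $x$, and letting $\varepsilon\to0$ does not rescue the estimate because the possible growth of $V$ during the (unbounded) time interval on which $x\le K+\varepsilon$ is not controlled uniformly in $\varepsilon$. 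Your two proposed repairs are plans rather than proofs: the ``sharpened estimate'' still faces the same loss of $\varepsilon$-room, and a LaSalle-type invariance argument is not available off the shelf here, precisely because of the state-dependent delay and the factor $1-\tau'(y)y'(t)$, as you yourself observe. Since the claim is an equivalence with ``$\le d$,'' the borderline case cannot be set aside.

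The paper's key idea, which your proposal is missing, is to replace the asymptotic bound by a pointwise dichotomy on $x$. Because $x'(t)=-f(K,y(t))y(t)<0$ whenever $x(t)=K$ (positivity of $y$), either (i) $x(t)>K$ for all $t$, or (ii) $x(t_0)<K$ for some $t_0$, and then $x(t)<K$ for all $t\ge t_0$. In case (i) the hypothesis $ne^{-d_j\tau(0)}f(K,0)\le d$ is not needed at all: integrating the prey equation and using $rx(1-x/K)\le0$ for $x\ge K$ gives $\int_0^\infty f(K,y(s))y(s)\,ds\le x(0)<\infty$, hence $y(t)\to0$. In case (ii) the strict pointwise inequality $x(t)<K$ makes the bracket in $V'$ at most $ne^{-d_j\tau(0)}f(K,0)-d\le0$ with no $\varepsilon$ loss, so $V$ is monotone even in the borderline case. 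If you rewrite your proof around this dichotomy, your supporting machinery — boundedness of $y'$ obtained by solving the second equation for $y'$, the substitution $u=t-\tau(y(t))$ with $du=(1-\tau'(y)y'(t))\,dt$ to convert $\int y(t-\tau(y))(1-\tau'(y)y'(t))\,dt$ into $\int y(u)\,du$, and Barbalat's lemma — is exactly what is needed to pass rigorously from monotonicity of $V$ (or integrability in case (i)) to $y(t)\to0$, and your concluding arguments for $x(t)\to K$ and $y_j(t)\to0$ via the representation of $y_j$ carry over unchanged. In that respect your write-up is actually more careful than the paper's on the analytic details; what it lacks is the structural case split that makes the borderline case tractable.
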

\begin{proof}
By the first equation of system (\ref{M2}), $x(t)$ is always decreasing when $x(t)>K$. We can show that if there exists some $t_0>0$ such that $x(t_0)<K$, then $x(t)<K$ for all $t>t_0$. Otherwise there must exist some $t_1>t_0$ such that $x(t_1)=K$ and $x'(t_1)\geq0$. This is impossible. Hence, there are two possible cases, either

(1) $x(t)>K$ and $x(t){\rightarrow K}$ as $t \rightarrow \infty$, or

(2) there exists some $t_0>0$ such that $x(t_0)<K$.

For the first of these cases, we only need to prove that $\lim_{t \rightarrow \infty} y(t)=0$, since this implies   $\lim_{t \rightarrow \infty} y_j(t)=0$. Integrating the equation for $x(t)$ in (\ref{M2}), we have
\begin{equation*}
\begin{split}
x(t)-x(0) &=\int_{0}^{t}rx(s)(1-\frac{x(s)} {K})ds-\int_{0}^{t}\ f(x(s),y(s))y(s)ds\\
&<\int_{0}^{t} \underbrace {rx(s)(1-\frac{x(s)} {K})}_{x(s)\geq K}ds-\int_{0}^{t}\ f(K,y(s))y(s)ds
\end{split}
\end{equation*}
for all $t\geq 0$, and then
$$\int_{0}^{t}\ f(K,y(s))y(s)ds<x(0)-x(t)+\int_{0}^{t} \underbrace {rx(s)(1-\frac{x(s)} {K})}_{\leq 0}ds<x(0).$$
By the boundedness of $y(t)$,then $\int_{0}^{t}y(s)ds$ is bounded for all $t\geq t_0$,and this implies $\lim_{t \rightarrow \infty} y(t)=0$.

For the second of these cases, consider the function
$$V=y(t)+d\int_{t-\tau(y)}^{t} y(s)ds$$
Then based on hypotheses $A3-ii$, for all $t\geq t_0+\tau_M$, we have
\begin{equation*}
\begin{split}
\frac{dV}{dt}|_{(2)}=&ne^{-d_j\tau(y)}(1-\tau'(y)y'(t))f(x(t-\tau(y)),y(t-\tau(y)))y(t-\tau(y))-dy(t)\\
&+d\big[y(t)-y(t-\tau(y))(1-\tau'(y)y'(t))\big]\\
=&y(t-\tau(y))(1-\tau'(y)y'(t))\big[ne^{-d_j\tau(y)}f(x(t-\tau(y)),y(t-\tau(y)))-d\big]\\
<&y(t-\tau(y))(1-\tau'(y)y'(t))\big[ne^{-d_j\tau(0)}f(K,0)-d\big].
\end{split}
\end{equation*}
When $ne^{-d_j\tau(0)}f(K,0)\leq d$, $\frac{dV}{dt}|_{(2)}<0$ yields
$\lim_{t \rightarrow \infty}(x(t),y(t),y_j(t))=(K,0,0)$i from the positivity of $y(t)$.

The prove the necessary condition for $\lim_{t \rightarrow \infty}(x(t),y(t),y_j(t))=(K,0,0)$, assume in contrast, i.e., $ne^{-d_j\tau(0)}f(K,0)>d$,
there exists a positive equilibrium $(x^*,y^*,y_j^*)$ in the system (\ref{M2}),
contradicting with $\lim_{t \rightarrow \infty}(x(t),y(t),y_j(t))=(K,0,0)$ for all solution $(x(t),y(t),y_j(t))$. Hence there must be $ne^{-d_j\tau(0)}f(K,0)\leq d$ which is the sufficient condition in Theorem $3$.

To show the necessity of Theorem $3$, we assume, by contrast, i.e., \\
$ne^{-d_j\tau(0)}f(K,0)\leq d$; then by Claim {\bf B}, $x(t)\to K, y(t)\to0$ as $t\to\infty$, which contradict the permanence of (\ref{M2}). This ends of the prove in Theorem $3$. \end{proof}

\begin{remark}
Due to the existence of delay time-derivative  term $1-\tau'(y)y'(t)$ in our model, the comparison principle cannot be used directly. We overcome the difficulty by constructing some Lyapunov-like function. Here
Theorems 3 is an extension of \cite[Theorem 3.2]{6} and The claim {\bf B} extends \cite[Theorem 3.1]{6}.
\end{remark}

\section{Linearized Stability}
In this section,we study the linearized stability of the equilibria $E_0$, $E_1$ and $E^*$ in the system (\ref{M5}).
Different from the linearization for constant delay system, here
the delay is a function depending on the state variable $y$, thus linearizing the system with SDTD is not completely straightforward.
Adopting the ``freezing the delay '' idea introduced in \cite{21},
we linearize the system (\ref{M5}) first.

By equation (\ref{M4}), we can get the same conclusions for system (\ref{M2}) and System (\ref{M5}).
Let $E^*(x^*,y^*)$ be an arbitrary equilibrium. we can get the linearized system of (\ref{M5}) as :
\begin{equation}\label{M6}
\begin{split}
x'(t) &=Ax(t)-By(t),\\
y'(t) &=Cx(t-\tau(y^*))+(\eta-d)y(t)+Dy(t-\tau(y^*)).
\end{split}
\end{equation}
with
\begin{align*}
A&=r-\frac{2r}{K}x^*-f^*_xy^*, &           B&=f^*+f^*_yy^*,  \\
C&=ne^{-d_j\tau(y^*)}f^*_xy^*, &     D&=ne^{-d_j\tau(y^*)}(f^*+f^*_yy^*),\\
\eta&=ne^{-d_j\tau(y^*)}\tau'(y^*)f^*y^*(d-d_j).
\end{align*}
where $f^*=f(x,y)\Big|_{x=x^*,y=y^*}$, $f_x^*=\frac{\partial f}{\partial x}(x,y)\Big|_{x=x^*,y=y^*}$, $f_y^*=\frac{\partial f}{\partial y}(x,y)\Big|_{x=x^*,y=y^*}$.\\
This leads to the following characteristic equation:
\begin{equation*}
(\lambda-A)\big(\lambda+d-\eta-De^{-d_j\tau(y^*)}\big)+BCe^{-\lambda\tau(y^*)}=0.
\end{equation*}

\begin{theorem}
The trivial equilibrium $E_0 = (0, 0)$  is unstable.
\end{theorem}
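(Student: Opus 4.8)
The plan is to read off the instability of $E_0$ directly from the characteristic equation by specializing the coefficients $A,B,C,D,\eta$ to the equilibrium point $(x^*,y^*)=(0,0)$. Since linearized (in)stability is governed by the location of the roots of the characteristic quasi-polynomial, it suffices to exhibit a single root with positive real part.

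First I would substitute $x^*=0$ and $y^*=0$ into the five coefficient formulas. The crucial structural observation is that each of $B$, $C$, $D$ and $\eta$ carries a factor that vanishes at the origin: by hypothesis $(A_3)$-(i) we have $f^*=f(0,0)=0$, and every one of these four coefficients contains the factor $y^*$ or the factor $f^*$ (or both). Concretely,
\begin{equation*}
B=f^*+f^*_yy^*=0,\quad C=ne^{-d_j\tau(0)}f^*_xy^*=0,\quad D=ne^{-d_j\tau(0)}(f^*+f^*_yy^*)=0,
\end{equation*}
and likewise $\eta=ne^{-d_j\tau(0)}\tau'(0)f^*y^*(d-d_j)=0$; here I would note that $f^*_x(0,0)$ is finite by $(A_3)$-(i), so no $0\cdot\infty$ indeterminacy arises and the vanishing is genuine. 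The only surviving coefficient is $A=r-\tfrac{2r}{K}x^*-f^*_xy^*=r$.

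With these values the characteristic equation collapses to $(\lambda-r)(\lambda+d)=0$, whose roots are $\lambda_1=r>0$ and $\lambda_2=-d<0$. Because $r>0$ is a positive real eigenvalue, the linearized system at $E_0$ possesses a growing mode, and hence $E_0$ is unstable. I expect essentially no obstacle in this argument: the transcendental delay factor $e^{-\lambda\tau(y^*)}$ is multiplied by $BC=0$ and therefore drops out entirely, so the equation degenerates to an ordinary quadratic at the origin and the ``frozen-delay'' linearization principle applies without complication. The single point worth stating explicitly is the finiteness of $f^*_x(0,0)$, which is exactly what hypothesis $(A_3)$-(i) was designed to guarantee.
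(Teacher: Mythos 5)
Your proposal is correct and follows essentially the same route as the paper: specialize the coefficients at the origin so that $B=C=D=\eta=0$ and $A=r$, reduce the characteristic equation to $(\lambda-r)(\lambda+d)=0$, and conclude instability from the positive root $\lambda=r$. The only difference is that you spell out why the coefficients vanish (finiteness of $f_x^*(0,0)$ from $(A_3)$-(i) and $f(0,0)=0$), which the paper states without comment.
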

\begin{proof}
For the trivial equilibrium $E_0 = (0, 0)$, we have

$$A=r, \ B=C=D=\eta=0.$$

The characteristic equation
\begin{equation}\label{M7}
(\lambda-r)(\lambda+d)=0
\end{equation}
Obviously, there is a positive real root $\lambda=r$ in (\ref{M7}). Hence $E_0 = (0, 0)$  is unstable.
\end{proof}
\begin{theorem}
The predator extinction equilibrium $E_1(K,0)$ is
\begin{itemize}
\item[(i)]  unstable if $ne^{-d_j\tau(0)}f(K,0)>d$;
\item[(ii)]  linearly neutrally stable if $ne^{-d_j\tau(0)}f(K,0)=d$;
\item[(iii)] locally asymptotically stable if $ne^{-d_j\tau(0)}f(K,0)<d$.
\end{itemize}
\end{theorem}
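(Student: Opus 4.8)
The plan is to evaluate the coefficients $A,B,C,D,\eta$ of the linearized system (\ref{M6}) at the equilibrium $E_1=(K,0)$ and to exploit the fact that the vanishing second component $y^*=0$ kills two of them. Substituting $x^*=K$, $y^*=0$ into the defining formulas gives $A=-r$, $B=f(K,0)$, $C=0$, $\eta=0$ and $D=ne^{-d_j\tau(0)}f(K,0)=:\Delta$. Because $C=0$, the coupling term drops out and the characteristic equation factors as
\begin{equation*}
(\lambda+r)\bigl(\lambda+d-\Delta e^{-\lambda\tau(0)}\bigr)=0 .
\end{equation*}
The factor $\lambda+r=0$ contributes only the root $\lambda=-r<0$, so the entire stability trichotomy reduces to locating the roots of the transcendental factor $h(\lambda):=\lambda+d-\Delta e^{-\lambda\tau(0)}$, with the governing quantity being exactly $\Delta$ versus $d$.

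For part (i), $\Delta>d$, I would work on the real axis. Since $h(0)=d-\Delta<0$ while $h(\lambda)\to+\infty$ as $\lambda\to+\infty$ (the linear term dominates the bounded, decaying exponential, whether $\tau(0)>0$ or $\tau(0)=0$), the intermediate value theorem produces a real root $\lambda_0>0$. A single root in the open right half-plane already forces $E_1$ to be unstable.

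The clean tool for parts (ii) and (iii) is a modulus estimate on the closed right half-plane. Suppose $\lambda=u+iv$ is a root of $h$ with $u\ge0$. Then $|\lambda+d|=|\Delta e^{-\lambda\tau(0)}|=\Delta e^{-u\tau(0)}\le\Delta$, because $\tau(0)\ge0$ and $u\ge0$, whereas $|\lambda+d|=\sqrt{(u+d)^2+v^2}\ge u+d\ge d$. For part (iii), $\Delta<d$, these bounds are incompatible, so $h$ has no root with $u\ge0$; together with $\lambda=-r$ this shows every characteristic root has negative real part, giving local asymptotic stability. For part (ii), $\Delta=d$, the same inequalities force any right-half-plane root to satisfy $u=0$, and then $\sqrt{d^2+v^2}=d$ forces $v=0$, so $\lambda=0$ is the only root on the imaginary axis; since $h(0)=d-\Delta=0$ confirms it is a root and $h'(0)=1+\Delta\tau(0)>0$ shows it is simple, the linearization has a simple zero eigenvalue with all other roots in the open left half-plane, which is precisely linear neutral stability.

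The computations are short; the only place needing care is the correct reading of the characteristic factor, namely that the delay must enter the $D$-term as $e^{-\lambda\tau(y^*)}$ so that $h$ is genuinely transcendental. I expect the main, though modest, obstacle to be the borderline case (ii): one must argue not merely that there is no root with $u>0$, but that the unique imaginary-axis root is $\lambda=0$ \emph{and} that it is simple, since this is exactly what separates neutral stability from instability and justifies the phrase ``linearly neutrally stable.''
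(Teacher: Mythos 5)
Your proposal is correct and follows essentially the same route as the paper: evaluate the linearization coefficients at $(K,0)$, factor the characteristic equation into $(\lambda+r)$ times the transcendental factor, use the intermediate value theorem for case (i), and use modulus estimates on the closed right half-plane for cases (ii) and (iii), including the simplicity of the zero root via $G'(0)>0$. Your treatment of the borderline case is in fact slightly more complete than the paper's, since you explicitly rule out nonzero purely imaginary roots rather than only concluding that the remaining roots have nonpositive real parts.
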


\begin{proof}
 For the extinction equilibrium $E_1 = (K, 0)$, we have

$$A=-r, \ B=f(K,0),\ D=ne^{-d_j\tau(0)}f(K,0),\ C=\eta=0.$$

The characteristic equation
\begin{equation}\label{M8}
(\lambda+r)\big[\lambda+d-ne^{-d_j\tau(0)}f(K,0)e^{-\lambda\tau(0)}\big]=0
\end{equation}\label{M9}
Obviously, the stability of $E_1$ is determined by the roots in
\begin{equation}\label{M9}
G(\lambda)=\lambda+d-ne^{-d_j\tau(0)}f(K,0)e^{-\lambda\tau(0)}=0
\end{equation}
\begin{itemize}
\item[(i)] When $ne^{-d_j\tau(0)}f(K,0)>d$,  we have $G(0)=d-ne^{-d_j\tau(0)}f(K,0)<0$, and $G(+\infty)=+\infty$. Hence $G(\lambda)$ has at least one positive root and $E_1$ is unstable.
\item[(ii)] As $ne^{-d_j\tau(0)}f(K,0)=d$,  $G(\lambda)=\lambda+d-de^{-\lambda\tau(0)}$, so $\lambda=0$ is a root of $G(\lambda)=0$. Furthermore, since $G'(\lambda)=1+\tau(0)de^{-\lambda\tau(0)}$,  we have $G'(0)>0$. Then, the root $\lambda=0$ is simple.

Denote all the other roots as $\lambda=u+iv$, then $u,v$ must satisfy
$$(u+d)^2+v^2=d^2e^{-2u\tau(0)}.$$
Thus $u\leq0$, i.e., all the other roots have real nonpositive parts. Therefore $E_1$ is linearly neutrally stable.
\item[(iii)]  If $ne^{-d_j\tau(0)}f(K,0)<d$,  i.e.
$$d-nf(K,0)e^{-d_j\tau(0)}e^{-\lambda\tau(0)}>0$$
Then $G(\lambda)=0$ implies that
$$\lambda+d=nf(K,0)e^{-d_j\tau(0)}e^{-\lambda\tau(0)}.$$
Assume $Re(\lambda)\geq0$, then
$$|\lambda+d|>d>nf(K,0)e^{-d_j\tau(0)}e^{-\lambda\tau(0)}$$
yields contradiction.
This shows that all roots of $G(\lambda)=0$ must have negative real parts, and therefore $E_1$ is locally asymptotically stable.
\end{itemize}
\end{proof}

  Combining the result in Theorem 4, we have:
\begin{corollary}
The equilibrium $E_1(K,0)$ of system (\ref{M5}) is globally asymptotically stable iff $ne^{-d_j\tau(0)}f(K,0)<d$  holds true.
\end{corollary}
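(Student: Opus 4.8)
The plan is to obtain global asymptotic stability of $E_1(K,0)$ by decomposing it into its two constituent properties---local asymptotic stability and global attractivity---each of which has already been established above, and then to read off the sharp threshold by comparing the three cases. Recall that an equilibrium is globally asymptotically stable precisely when it is both locally asymptotically stable and globally attractive; thus the content of the corollary is that for system (\ref{M5}) these two properties hold simultaneously exactly on the strict-inequality region $ne^{-d_j\tau(0)}f(K,0) < d$.

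For the sufficiency (``if'') direction I would assume $ne^{-d_j\tau(0)}f(K,0) < d$. This inequality \emph{a fortiori} satisfies $ne^{-d_j\tau(0)}f(K,0)\le d$, so Claim~\textbf{B} applies and yields $\lim_{t\to\infty}(x(t),y(t),y_j(t)) = (K,0,0)$ for every solution subject to (\ref{M3}); this is exactly global attractivity of $E_1$. At the same time the strict inequality places us in case (iii) of the linearized-stability theorem for $E_1$, giving local asymptotic stability. Since local asymptotic stability in particular supplies Lyapunov stability, combining it with global attractivity gives global asymptotic stability.

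For the necessity (``only if'') direction I would argue by contraposition along the trichotomy supplied by the stability theorem for $E_1$. If $ne^{-d_j\tau(0)}f(K,0) > d$, then case (i) makes $E_1$ unstable, hence not GAS; moreover Theorem~\ref{TH1} shows the system is permanent, so positive trajectories stay bounded away from the boundary and cannot converge to $E_1$. If $ne^{-d_j\tau(0)}f(K,0) = d$, then case (ii) gives only linear neutral stability---the characteristic factor (\ref{M9}) carries the simple root $\lambda=0$---so $E_1$ fails to be asymptotically stable even locally, and again cannot be GAS. The only remaining possibility compatible with GAS is the strict inequality, which proves necessity.

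The step I expect to require the most care is the threshold case $ne^{-d_j\tau(0)}f(K,0) = d$. Here Claim~\textbf{B} still delivers global attractivity, so one might be tempted to declare $E_1$ globally asymptotically stable; the subtlety is that attractivity is not asymptotic stability, because the zero root of (\ref{M9}) destroys the decay required by the definition of asymptotic stability. Making this distinction explicit---that global attractivity together with mere neutral linear stability does not upgrade to GAS---is precisely what pins the characterization to the strict inequality rather than to $\le d$, and is the crux that separates this corollary from Claim~\textbf{B}.
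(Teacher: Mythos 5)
Your proposal is correct and takes essentially the same route as the paper: the corollary is obtained by combining the local-stability trichotomy for $E_1$ (unstable if $ne^{-d_j\tau(0)}f(K,0)>d$, neutrally stable if equal, locally asymptotically stable if $<d$) with the global attractivity supplied by Claim \textbf{B}, which is exactly what the paper's one-line derivation (``Combining the result\dots'') intends. Your explicit treatment of the threshold case $ne^{-d_j\tau(0)}f(K,0)=d$, distinguishing attractivity from asymptotic stability, is in fact more detailed than the paper's own argument.
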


{\bf To discuss the stability of the positive equilibria $E_2$,
For simplicity, in the rest of the manuscript,
we chose}
$$f(x,y)=\frac{bx}{1+k_1x+k_2y},$$ for simplicity, in the rest of the manuscript.
Then the system (\ref{M5}) becomes
\begin{equation}\label{N1}
\begin{split}
x'(t) &=rx(t)(1-\frac{x(t)}{K})-\frac{bx(t)y(t)}{1+k_1x(t)+k_2y(t)},\\
y'(t) &=n(1-\tau'(y)y'(t)) e^{-d_j\tau(y)}\frac{bx(t-\tau(y))y(t-\tau(y))}{1+k_1x(t-\tau(y))+k_2y(t-\tau(y))}-dy(t).
\end{split}
\end{equation}
and
\begin{align*}
A&=r-\frac{2r}{K}x^*-\frac{(1+k_2y^*)by^*}{(1+k_1x^*+k_2y^*)^2}, &           B&=\frac{(1+k_1x^*)bx^*}{(1+k_1x^*+k_2y^*)^2},\\
C&=\frac{nby^*e^{-d_j\tau(y^*)}(1+k_2y^*)}{(1+k_1x^*+k_2y^*)^2}, &     D&=\frac{nbx^*e^{-d_j\tau(y^*)}(1+k_1x^*)}{(1+k_1x^*+k_2y^*)^2},\\
\eta&=\frac{nbx^*y^*e^{-d_j\tau(y^*)}\tau'(y^*)(d-d_j)}{1+k_1x^*+k_2y^*}.
\end{align*}
  From \cite{22}, we know the distribution of the roods in an $4^{th}$-order polynomial
\begin{equation}\label{M10}
v^4+Q_1v^3+Q_2v^2+Q_3v+Q_4=0,
\end{equation}
which is:
\begin{lemma}
Considering the positive real root of equation (\ref{M10}), there are the following conclusions: \begin{itemize}
\item[(i)]  If $Q_4<0$, then equation (\ref{M10}) has at least one positive real root;
\item[(ii)]  If $Q_4\geq0$, and $\bigtriangleup\geq0$, then equation (\ref{M10}) have positive real roots iff $v_1>0$, and $h(v_1)<0$;
\item[(iii)]  If $Q_4\geq0$, and $\bigtriangleup<0$, then equation (\ref{M10}) have positive real roots iff exists $v^*\in\{v_1,v_2,v_3\}$ such that $v^*>0$, and $h(v^*)\leq0$ holds true.
\end{itemize}
\end{lemma}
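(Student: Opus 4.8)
The plan is to read the left-hand side of (\ref{M10}) as a real-valued function $h(v)=v^4+Q_1v^3+Q_2v^2+Q_3v+Q_4$ and to locate its zeros on $(0,\infty)$ by elementary real analysis. Two anchoring facts drive everything: $h(0)=Q_4$, and $h(v)\to+\infty$ as $v\to+\infty$ since the quartic leading term dominates. A positive real root is then just a point where $h$ crosses or touches the level zero on the positive half-line, so the whole argument reduces to a sign-and-monotonicity analysis combined with the intermediate value theorem. Case (i) falls out at once from the anchoring facts alone: if $Q_4<0$ then $h(0)<0$ while $h(+\infty)=+\infty$, so the intermediate value theorem yields a zero in $(0,\infty)$.

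For the remaining two cases assume $Q_4\ge 0$, so $h$ starts nonnegative, $h(0)\ge 0$. The shape of $h$ on $(0,\infty)$ is governed by its critical points, the roots of $h'(v)=4v^3+3Q_1v^2+2Q_2v+Q_3$. Dividing by $4$ makes this a monic cubic whose (suitably normalised) discriminant is the quantity $\Delta$ in the statement, with $v_1,v_2,v_3$ its real roots; the sign of $\Delta$ decides whether $h$ has a single real critical point (the case $\Delta\ge 0$) or three (the case $\Delta<0$). Because $h(v)\to+\infty$ at both ends, in the one-critical-point case that point $v_1$ is automatically the global minimiser of $h$, and in the three-critical-point case the critical values, read in increasing order of argument, alternate minimum--maximum--minimum.

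In case (ii) ($\Delta\ge 0$) I would note that $h$ is decreasing up to $v_1$ and increasing thereafter. If $v_1\le 0$, then $h$ is increasing throughout $(0,\infty)$ and, starting from $h(0)\ge 0$, stays positive, so there is no positive root; this is exactly the failure of the condition $v_1>0$. If $v_1>0$, then $h$ descends from $h(0)\ge 0$ to its minimum $h(v_1)$ and climbs back to $+\infty$, so by the intermediate value theorem positive roots occur precisely when the minimum dips strictly below zero, i.e.\ $h(v_1)<0$, giving the stated iff. Case (iii) ($\Delta<0$) is the same reasoning applied to the min--max--min profile: a positive root exists iff $h$ reaches a value $\le 0$ at one of its positive local minima, which, stated as an existence claim over the critical points, is exactly ``there is $v^*\in\{v_1,v_2,v_3\}$ with $v^*>0$ and $h(v^*)\le 0$.'' For the necessity direction I would observe that if $h(0)\ge 0$ and $h$ has a positive root, then $\min_{[0,\infty)}h\le 0$, and this minimum cannot be attained at the endpoint $0$ (unless $0$ is itself a repeated root), hence it sits at a positive critical point.

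The main obstacle is not the generic geometry but the boundary and degenerate cases. I would need to pin down the exact discriminant convention so that $\Delta\ge 0$ and $\Delta<0$ genuinely correspond to one versus three real critical points, since sign conventions for the cubic discriminant differ; to handle the coincidence of critical points at $\Delta=0$; and to treat the equality situations carefully (a critical point sitting exactly at $v=0$, or a local minimum with $h=0$, which produces a tangential double root rather than a transversal crossing). Making the iff statements airtight — in particular the strict inequality in (ii) against the non-strict one in (iii), and the ``only if'' direction that excludes an everywhere-positive $h$ — is where the careful bookkeeping will concentrate.
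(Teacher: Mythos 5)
You should first be aware that the paper contains no proof of this lemma at all: it is quoted verbatim (together with the definitions of $h$, $M$, $N$, $\bigtriangleup$, $Y_i$, $v_i$) from the cited reference \cite{22} of Li and Wei, so there is no in-paper argument to compare against. Your strategy --- anchoring on $h(0)=Q_4$ and $h(+\infty)=+\infty$, using the intermediate value theorem for (i), and classifying the positive-axis geometry of $h$ through the real roots of the cubic $h'(v)=4v^3+3Q_1v^2+2Q_2v+Q_3$ obtained by Cardano's formulas --- is exactly the argument of that source. Parts (i) and (iii) of your sketch are sound: (i) is immediate, sufficiency in (iii) follows because $h(v^*)\leq 0$ together with $h(+\infty)=+\infty$ yields a root in $[v^*,\infty)$, and your necessity argument (the minimum of $h$ over $[0,\infty)$ is attained, is $\leq 0$ when a positive root exists, and can always be taken at a positive critical point, since if it were attained only at $0$ then the positive root itself would be an interior minimizer) is correct.

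There are, however, two genuine problems. First, in case (ii) your monotonicity argument proves ``positive roots exist iff $v_1>0$ and $h(v_1)\leq 0$,'' not the strict inequality stated: in the tangency case $v_1>0$, $h(v_1)=0$, the point $v_1$ is itself a positive (double) root of (\ref{M10}), so the ``only if'' direction of (ii) with $h(v_1)<0$ is simply false. You flagged this boundary case as bookkeeping to be done later, but no bookkeeping can repair it --- it is a defect of the statement as transcribed, and an honest proof must either weaken (ii) to $h(v_1)\leq 0$ or note the exception. Second, your worry about discriminant conventions is substantive rather than cosmetic: with the quantities as literally printed in the paper, namely $\bigtriangleup=(N/2)^2+(M/2)^3$ in place of the Cardano quantity $(N/2)^2+(M/3)^3$, the last term $Q_3$ in $N$ in place of $Q_3/4$, and $v_i=Y_i-3Q_1/4$ in place of $v_i=Y_i-Q_1/4$, the numbers $v_i$ are not the roots of $h'$ and the sign of $\bigtriangleup$ does not separate the one-sign-change case from the three-critical-point case; the pivotal step of your proof fails until these definitions are corrected. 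A smaller point worth one sentence in a complete write-up: when $\bigtriangleup=0$ the cubic $h'$ can have two distinct real roots (one double), so ``a single real critical point'' is not quite accurate; the argument survives because the double root of $h'$ is not a sign change, and the Cardano root $v_1$ is precisely the simple root, hence still the global minimizer of $h$.
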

where
\begin{align*}
h(v)&=v^4+Q_1v^3+Q_2v^2+Q_3v+Q_4,\\
M&=\frac{1}{2}Q_2-\frac{3}{16}Q_1^2,&   N &=\frac{1}{32}Q_1^3-\frac{1}{8}Q_1Q_2+Q_3,\\
\bigtriangleup&=(\frac{N}{2})^2+(\frac{M}{2})^3,&         \sigma &=\frac{-1+\sqrt{3}i}{2},\\
Y_1&=\sqrt[3]{-\frac{N}{2}+\sqrt{\bigtriangleup}}+\sqrt[3]{-\frac{N}{2}-\sqrt{\bigtriangleup}},&    Y_2&=\sigma\sqrt[3]{-\frac{N}{2}+\sqrt{\bigtriangleup}}+\sigma^2\sqrt[3]{-\frac{N}{2}-\sqrt{\bigtriangleup}}\\
Y_3&=\sigma^2\sqrt[3]{-\frac{N}{2}+\sqrt{\bigtriangleup}}+\sigma\sqrt[3]{-\frac{N}{2}-\sqrt{\bigtriangleup}},&   v_i&=Y_i-\frac{3Q_1}{4},~~~i=1,2,3.
\end{align*}
Consequently, we have the following:
\begin{theorem}
The positive equilibrium $(x^*,y^*)$ in system (\ref{N1}) is locally asymptotically stable provided that system (\ref{M5}) is permanent and
\begin{equation}\label{k2}
k_2>2\cdot \frac{nbe^{-d_j\tau(0)}-dk_1}{nre^{-d_j\tau(0)}},\ \ \ \ d\leq d_j
\end{equation}
holds true.
\end{theorem}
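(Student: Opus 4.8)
The plan is to exploit the ``freezing the delay'' reduction already set up before the statement: at the coexistence equilibrium the state-dependent delay takes the constant value $\tau^{*}:=\tau(y^{*})$, so the local stability of $(x^{*},y^{*})$ for the nonlinear system (\ref{N1}) is governed by the constant-delay linearization (\ref{M6}) with the Beddington--DeAngelis coefficients $A,B,C,D,\eta$ listed after (\ref{N1}). Writing the characteristic equation in the form $P(\lambda)+Q(\lambda)e^{-\lambda\tau^{*}}=0$ with $P(\lambda)=\lambda^{2}+(d-\eta-A)\lambda-A(d-\eta)$ and $Q(\lambda)=-D\lambda+(AD+BC)$, I would aim to show that every root has negative real part. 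The strategy has two halves: first show the delay-free problem ($\tau^{*}=0$) is asymptotically stable, then show that no root can cross the imaginary axis as $\tau^{*}$ grows; by continuity of the roots in $\tau^{*}$ this forces all roots to remain in the open left half-plane for every $\tau^{*}\ge 0$, in particular at the actual frozen value.

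For the delay-free step I would apply the Routh--Hurwitz criterion to $P(\lambda)+Q(\lambda)=\lambda^{2}+(d-\eta-A-D)\lambda+(BC-A(d-\eta-D))$, which is stable exactly when $d-\eta-A-D>0$ and $BC-A(d-\eta-D)>0$. For the crossing step I would set $\lambda=iv$, separate real and imaginary parts, and eliminate the exponential by equating $|P(iv)|^{2}=|Q(iv)|^{2}$; this produces the quartic of the lemma on the roots of (\ref{M10}) in the purely biquadratic form $z^{2}+Q_{2}z+Q_{4}=0$ with $z=v^{2}$, where $Q_{2}=(d-\eta)^{2}+A^{2}-D^{2}$ and $Q_{4}=A^{2}(d-\eta)^{2}-(AD+BC)^{2}$ (so $Q_{1}=Q_{3}=0$). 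By that lemma, the absence of a positive real root of this equation, hence the absence of purely imaginary characteristic roots, is guaranteed once $Q_{2}>0$ and $Q_{4}\ge 0$.

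Here the two hypotheses enter. Under $d\le d_{j}$ one has $\eta=ne^{-d_{j}\tau^{*}}\tau'(y^{*})f^{*}y^{*}(d-d_{j})\le 0$, so $d-\eta\ge d$; combined with the equilibrium identity $ne^{-d_{j}\tau^{*}}f^{*}=d$, which forces $D=d(1+k_{1}x^{*})/S<d$ with $S:=1+k_{1}x^{*}+k_{2}y^{*}$, this gives $(d-\eta)^{2}>D^{2}$ and hence $Q_{2}>0$ for free; the same inequality also yields $d-\eta-D>0$, so once $A<0$ the two Routh--Hurwitz conditions hold automatically. The real content is then $A<0$ together with $Q_{4}\ge 0$, the latter being equivalent to $|A|(d-\eta+D)\ge BC$ when $A<0$. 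Using the equilibrium relations $r(1-x^{*}/K)=by^{*}/S$ and $ne^{-d_{j}\tau^{*}}bx^{*}=dS$ I would rewrite $A=x^{*}(k_{1}by^{*}/S^{2}-r/K)$ and express $B,C,D$ through $x^{*},y^{*},S$, then invoke the assumed permanence to bound $x^{*},y^{*}$ and the monotonicity $e^{-d_{j}\tau^{*}}\le e^{-d_{j}\tau(0)}$ to replace the equilibrium-dependent quantities by the parameters in (\ref{k2}). Heuristically, large $k_{2}$ drives $S$ up, so $A\to -rx^{*}/K<0$ while $BC\to 0$, which is exactly the mechanism by which (\ref{k2}) makes the prey's self-limitation dominate the predation response and closes $Q_{4}\ge 0$.

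The main obstacle I anticipate is precisely this last reduction: the sign conditions $A<0$ and $Q_{4}\ge 0$ are phrased through the implicitly defined coexistence coordinates $(x^{*},y^{*})$, and turning them into the clean, coordinate-free threshold (\ref{k2}) requires carefully combining the permanence bounds with the two equilibrium identities and the delay monotonicity while keeping every estimate oriented correctly; in particular one must verify that (\ref{k2}) secures not merely $A<0$ but the sharper magnitude inequality $|A|(d-\eta+D)\ge BC$. A secondary but foundational point, needed for full rigor, is the legitimacy of the freezing reduction itself for state-dependent delays: one should invoke the linearized-stability principle for SDTD equations as in \cite{21} to conclude that hyperbolicity of the frozen characteristic equation indeed yields local asymptotic stability of the nonlinear system (\ref{N1}), and thereby of (\ref{M2}) through the relation (\ref{M4}).
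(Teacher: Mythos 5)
Your framework matches the paper's: freeze the delay at $\tau^{*}=\tau(y^{*})$, write the characteristic equation as $\lambda^{2}+H_{1}\lambda+H_{2}+(N_{1}\lambda+N_{2})e^{-\lambda\tau^{*}}=0$, rule out a zero root via $H_{2}+N_{2}>0$, rule out purely imaginary roots via the biquadratic $v^{4}+B_{1}v^{2}+B_{2}=0$, and conclude by continuity in the delay. Your $Q_{2}=(d-\eta)^{2}+A^{2}-D^{2}$ and $Q_{4}=A^{2}(d-\eta)^{2}-(AD+BC)^{2}$ are exactly the paper's $B_{1},B_{2}$, and your reduction of everything to the two conditions $A<0$ and $|A|(d-\eta+D)\geq BC$ is algebraically correct; your explicit Routh--Hurwitz check at $\tau^{*}=0$ is even a step the paper glosses over. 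But there is a genuine gap, and it is precisely the step you yourself flag as ``the main obstacle'': you never derive $A<0$ and $Q_{4}\geq 0$ from hypothesis (\ref{k2}), offering only the asymptotic heuristic that large $k_{2}$ drives $A\to-rx^{*}/K$ and $BC\to 0$. A limiting argument of that kind cannot produce the stated finite threshold, and the permanence-based bounding of $(x^{*},y^{*})$ you propose is not how the clean constant in (\ref{k2}) arises.

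The paper closes this gap with two concrete devices that your outline lacks. First, it solves the equilibrium system (\ref{PE}) explicitly as a quadratic in $x^{*}$, getting $x^{*}=\frac{1}{2}\bigl(-\alpha+\sqrt{\alpha^{2}+4\beta}\bigr)$ with $\alpha<0$ under (\ref{k2}); then
$$x^{*}>-\alpha\geq K\Bigl(1-\frac{nbe^{-d_{j}\tau(0)}-dk_{1}}{nrk_{2}e^{-d_{j}\tau(0)}}\Bigr)>\frac{K}{2},$$
where the monotonicity $e^{-d_{j}\tau(y^{*})}\leq e^{-d_{j}\tau(0)}$ and the factor $2$ in (\ref{k2}) enter; this yields $r-\frac{2r}{K}x^{*}<0$ and hence $A<0$. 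Second---and this is the key trick that makes your feared ``sharper magnitude inequality'' unnecessary---it uses the identity $A+\frac{by^{*}(1+k_{2}y^{*})}{S^{2}}=r-\frac{2r}{K}x^{*}$, with $S=1+k_{1}x^{*}+k_{2}y^{*}$, to factor
$$H_{2}-N_{2}=-\frac{nbx^{*}e^{-d_{j}\tau(y^{*})}}{S}\Bigl[A+\frac{1+k_{1}x^{*}}{S}\Bigl(r-\frac{2r}{K}x^{*}\Bigr)\Bigr],$$
so that $Q_{4}=(H_{2}+N_{2})(H_{2}-N_{2})>0$ follows from the same two sign facts $A<0$ and $r-\frac{2r}{K}x^{*}<0$, with no comparison of magnitudes between $|A|(d-\eta+D)$ and $BC$ ever required (the $\eta$-term only helps, since $A\eta\geq 0$). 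Without these two ingredients your proposal stops exactly where the real work of the theorem begins.
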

\begin{proof}
For the positive equilibrium $E^*(x^*,y^*)$, we have
\begin{equation}\label{PE}
r(1-\frac{x^*}{K})-\frac{by^*}{1+k_1x^*+k_2y^*}=0,~~~~~~~\frac{nbx^*e^{-d_j\tau(y^*)}}{1+k_1x^*+k_2y^*}-d=0.
\end{equation}
Thus
$$x^*=\frac{1}{2}(-\alpha +\sqrt{\alpha^2+4\beta}).$$
where
$$\alpha=\frac{K}{r}(\frac{nbe^{-d_j\tau(y^*)}-dk_1}{nk_2e^{-d_j\tau(y^*)}-r}),~~~~~~\beta=\frac{Kd}{nrk_2e^{-d_j\tau(y^*)}}.$$
nothing that $x^*<K$, then we have
$$x^*>\frac{1}{2}(-\alpha+|\alpha|)=-\alpha>K(1-\frac{nbe^{-d_j\tau(0)}-dk_1}{nrk_2e^{-d_j\tau(0)}})>K/2>0.$$
Since $d\leq d_j$, we have
$$A<0, \eta\leq0, B>0, C>0, D \leq d, r-\frac{2r}{K}x^*<0.$$
 The characteristic equation
\begin{equation}\label{M11}
(\lambda-A)(\lambda+d-\eta-De^{-d_j\tau(y^*)})+BCe^{-\lambda\tau(y^*)}=0.
\end{equation}

Thus the roots are given by the following equation:
\begin{equation}\label{M12}
G(\lambda)=\lambda^2+H_1\lambda+H_2+(N_1\lambda+N_2)e^{-\lambda\tau(y^*)}=0
\end{equation}
where
\begin{align*}
H_1&=d-\eta-A    &   H_2&=A\eta-Ad\\
N_1&=-D        &    N_2&=AD+BC\\
\end{align*}
Since
\begin{align*}
H_2+N_2& =A\eta-Ad+AD+EC\\
&=A(D-d+\eta)+BC\\
&>0
\end{align*}
Hence, zero is not the root of equation (\ref{M12}).

Now, let us prove that equation (\ref{M12}) has no purely imaginary roots.
Assume that equation (\ref{M12}) has a purely imaginary root $\lambda=iv$, where $v>0$. Substituting it into equation (\ref{M12}) and separating the real and the imaginary parts, we obtain
\begin{align*}
v^2-H_2&=N_1vsin(\tau(y^*v)+N_2cos(\tau(y^*v),\\
-H_1v&=N_1vcos(\tau(y^*v)-N_2sin(\tau(y^*v).
\end{align*}
Add the square of the above equation, we have
\begin{equation}\label{M13}
v^4+B_1v^2+B_2=0
\end{equation}
where $B_1=H_1^2-2H_2-N_1^2, B_2=H_2^2-N_2^2.$

Since the model assumes $\tau'(y^*)\geq0$, we will discuss the following two cases.
\noindent\textbf{Case 1}.  $\tau'(y^*)=0$,
we have $\eta=0$, then
\begin{equation*}
\begin{split}
B_1=&(d-A)^2+2Ad-D^2=A^2+(d+D)(d-D)>0,\\
B_2=&(H_2+N_2)(H_2-N_2)\\
=&(H_2+N_2)(-Ad-AD-BC)\\
=&(H_2+N_2)\Big[-A(\frac{nbx^*e^{-d_j\tau(y^*)}}{1+k_1x^*+k_2y^*}+\frac{nbx^*e^{-d_j\tau(y^*)}}{1+k_1x^*+k_2y^*}\frac{1+k_1x^*}{1+K_1x^*+k_2y^*})\\
&-\frac{nb^2x^*y^*e^{-d_j\tau(y^*)}(1+k_1x^*)(1+k_2y^*)}{(1+K_1x^*+k_2y^*)^4}\Big]\\
=&(H_2+N_2)\Big[-\frac{nbx^*e^{-d_j\tau(y^*)}}{1+k_1x^*+k_2y^*}(A+\frac{1+k_1x^*}{1+k_1x^*+k_2y^*}(r-\frac{2r}{K}x^*))\Big]\\
>&0.
\end{split}
\end{equation*}
Then the equation (\ref{M13}) has no positive roots, i.e. equation (\ref{M12}) has no purely imaginary roots.

\noindent\textbf{Case 2}.  $\tau'(y^*)>0$, we have $\eta\leq0$, then by Lemma 2, for the equation (\ref{M10}),
\begin{align*}
Q_1&=0, Q_2=B_1, Q_3=0,\\
Q_4& =B_2=(H_2+N_2)((H_2-N_2)\\
&=(H_2+N_2)(A\eta-Ad-AD-BC)\\
&>0.
\end{align*}
Therefore, equation (\ref{M13}) has  positive real roots can only be the case (ii) and (iii) in the Lemma $3$, however, $v_i=Y_i-3Q_1/4=Y_i=0,~i=1,2,3, $  the case (ii) and (iii) in the lemma $3$ cannot be established. Then the equation (\ref{M13}) has no positive roots, i.e., equation (\ref{M12}) has no purely imaginary roots, and each root of the characteristic equation has a negative real part. The proof is complete.
\end{proof}

\section{Global attractiveness}
In this section, we consider the global stability of $(x^*,y^*)$ in system (\ref{N1}). The next two lemmas are elementary and useful in the following discussion, which can be found in Gopalsamy \cite{GK} and Hirsh et al. \cite{HW}.
\begin{lemma}
($Barb\breve{a}lat$ Lemma). Let $a$ be a finite number and $f:  [a, \infty) \to R$ be a differentiable function. If $lim_{t\to\infty}f(t)$ exists (finite) and $f'$ is uniformly continuous on $[a, \infty)$, then $\lim_{t\to\infty} f'(t) = 0$.
\end{lemma}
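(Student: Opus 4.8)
The plan is to argue by contradiction, playing the finite limit of $f$ against a hypothetically persistent derivative. The point is that convergence of $f$ forces every increment $f(t+h)-f(t)$ over a \emph{fixed} length $h$ to vanish as $t\to\infty$, whereas a derivative bounded away from zero on intervals of fixed length would keep such increments bounded away from zero.

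First I would suppose the conclusion fails, so that there exist $\varepsilon>0$ and a sequence $t_n\to\infty$ with $|f'(t_n)|\ge\varepsilon$ for every $n$. The role of uniform continuity of $f'$ is to promote this pointwise largeness into largeness on whole intervals of a single fixed length: choose $\delta>0$ so that $|f'(s)-f'(t)|<\varepsilon/2$ whenever $|s-t|<\delta$. Then for every $s\in[t_n,t_n+\delta/2]$ we have $|f'(s)|\ge|f'(t_n)|-\varepsilon/2\ge\varepsilon/2$; moreover $f'$ cannot change sign on this interval, since its magnitude never drops below $\varepsilon/2$, so it keeps the sign of $f'(t_n)$ throughout.

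Next I would integrate. By the fundamental theorem of calculus together with the constant sign of $f'$ on $[t_n,t_n+\delta/2]$,
\begin{equation*}
\big|f(t_n+\tfrac{\delta}{2})-f(t_n)\big|=\left|\int_{t_n}^{t_n+\delta/2}f'(s)\,ds\right|=\int_{t_n}^{t_n+\delta/2}|f'(s)|\,ds\ge\frac{\varepsilon\delta}{4}.
\end{equation*}
On the other hand, since $\lim_{t\to\infty}f(t)$ exists and is finite, the Cauchy criterion forces $f(t_n+\delta/2)-f(t_n)\to0$ as $n\to\infty$, because both $t_n$ and $t_n+\delta/2$ tend to infinity. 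This contradicts the uniform lower bound $\varepsilon\delta/4>0$, and the contradiction yields $\lim_{t\to\infty}f'(t)=0$.

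The argument is short, and essentially the only delicate point is the sign-preservation step: one must choose $\delta$ so that the oscillation of $f'$ is controlled enough to keep both its magnitude above $\varepsilon/2$ and its sign fixed on the whole sampling interval, which is exactly what lets the absolute value be moved inside the integral. Everything else is a direct application of uniform continuity of $f'$ and the Cauchy property of the convergent function $f$, so I anticipate no real obstacle beyond recording these estimates carefully.
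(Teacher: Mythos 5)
Your proof is correct, and it is the standard contradiction argument for Barb\u{a}lat's lemma: use uniform continuity of $f'$ to inflate a persistently large derivative value $|f'(t_n)|\ge\varepsilon$ into a one-signed bound $|f'|\ge\varepsilon/2$ on intervals $[t_n,t_n+\delta/2]$ of fixed length, integrate to get $|f(t_n+\delta/2)-f(t_n)|\ge\varepsilon\delta/4$, and contradict the Cauchy property of the convergent function $f$; the sign-preservation step you flag is handled correctly (it follows at once from $|f'(s)-f'(t_n)|<\varepsilon/2$). Note that the paper itself does not prove this lemma at all --- it is stated as a known elementary result imported from Gopalsamy \cite{GK} --- so your argument simply reconstructs the classical textbook proof, and there is no discrepancy to report.
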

\begin{lemma}
(Fluctuation Lemma). Let $a$ be a finite number and $f:  [a, \infty) \to R$ be a differentiable function. If $\liminf_{t\to\infty} f (t) < \limsup_{t\to\infty}f(t)$, then there exist sequences $\{t_n\} \uparrow\infty$ and $\{s_n\}\uparrow\infty$ such that $\lim_{n\to\infty} f(t_n) = \limsup_{t\to\infty} f(t)$, $f'(t_n)=0$ and $\lim_{n\to\infty}f(s_n)=\liminf_{t\to\infty}f(t)$, $f'(s_n)=0$.
\end{lemma}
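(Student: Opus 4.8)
The plan is to prove the statement by locating genuine \emph{interior} local extrema of $f$ whose values are squeezed toward $L:=\limsup_{t\to\infty}f(t)$ and $\ell:=\liminf_{t\to\infty}f(t)$, respectively. Only two elementary tools are needed: the Weierstrass extreme value theorem (a continuous function attains its maximum on a compact interval) and Fermat's stationarity criterion (at an interior extremum of a differentiable function the derivative vanishes); since differentiability of $f$ gives continuity, both are available. First I would reduce the two assertions to one: because $\limsup_{t\to\infty}(-f)=-\ell$ and $\liminf_{t\to\infty}(-f)=-L$, the function $-f$ satisfies the same hypothesis $\liminf<\limsup$, so a sequence $\{s_n\}$ with $f'(s_n)=0$ and $f(s_n)\to\ell$ is obtained by applying the $\limsup$-construction to $-f$. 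Hence it suffices to produce $\{t_n\}\uparrow\infty$ with $f'(t_n)=0$ and $f(t_n)\to L$.

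For that construction I would fix a real level $c$ with $\ell<c<L$ (such a $c$ exists whenever $\ell<L$ in the extended reals, regardless of whether the bounds are finite). The hypothesis $\liminf_{t\to\infty}f=\ell<c$ means $f(t)<c$ at arbitrarily large times, while $\limsup_{t\to\infty}f=L>c$ supplies, by the definition of $\limsup$, a sequence of arbitrarily large times at which $f$ approaches $L$. Interlacing these, I would build an increasing sequence $\beta_0<q_1<\beta_1<q_2<\beta_2<\cdots\to\infty$ with $f(\beta_k)<c$ for every $k$ and with $f(q_k)>L-\tfrac1k$ when $L$ is finite (respectively $f(q_k)>k$ when $L=+\infty$).

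Then on each compact bracket $[\beta_{k-1},\beta_k]$ the continuous function $f$ attains its maximum at some $t_k$, and because the interior value $f(q_k)$ strictly exceeds $c$ while both endpoint values lie below $c$, this maximum cannot sit at a boundary point; thus $t_k\in(\beta_{k-1},\beta_k)$ is an interior maximum and Fermat forces $f'(t_k)=0$. The value is pinned from below by $f(t_k)\ge f(q_k)$ and from above by $f(t_k)\le\sup_{s\ge\beta_{k-1}}f(s)$, and this latter supremum tends to $L$ as $k\to\infty$ precisely because $\limsup_{t\to\infty}f=L$; squeezing yields $f(t_k)\to L$, while $t_k\ge\beta_{k-1}\to\infty$. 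Running the identical argument on $-f$ then delivers $\{s_n\}$ and completes the proof.

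The step I expect to be the genuine (if modest) obstacle is guaranteeing that the bracketed maximum is attained in the \emph{open} interval rather than at an endpoint, since only an interior extremum forces $f'=0$; this is exactly why I bracket each near-$L$ time $q_k$ between two sub-$c$ times $\beta_{k-1},\beta_k$. The remaining care is bookkeeping: checking that a real separating level $c$ and the interlaced sequences still exist when $\ell=-\infty$ and/or $L=+\infty$, which the same scheme covers once the condition $f(q_k)>k$ replaces $f(q_k)>L-\tfrac1k$.
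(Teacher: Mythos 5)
Your proof is correct, but be aware that the paper does not actually prove this lemma: it states it as a known result and refers the reader to Gopalsamy \cite{GK} and Hirsch--Hanisch--Gabriel \cite{HW}, so there is no in-paper argument to compare against. What you supply is the standard self-contained elementary proof --- reduce the $\liminf$ half to the $\limsup$ half via $-f$, interlace sub-level times $\beta_k$ (where $f<c$) with near-supremum times $q_k$, extract a maximum of $f$ on each compact bracket $[\beta_{k-1},\beta_k]$ by Weierstrass, note it must be interior because the endpoints lie below $c$ while $f(q_k)$ lies above, invoke Fermat to get $f'(t_k)=0$, and squeeze $f(q_k)\le f(t_k)\le \sup_{s\ge\beta_{k-1}}f(s)\to L$; this buys a complete argument that moreover covers the cases $L=+\infty$ and $\ell=-\infty$, which some textbook formulations exclude by assuming boundedness. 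One small repair: you demand $f(q_k)>L-\tfrac1k$ and later assert $f(q_k)>c$, but for the finitely many $k$ with $L-\tfrac1k\le c$ the first bound does not imply the second; either require $f(q_k)>\max\{c,\,L-\tfrac1k\}$ from the start (legitimate, since this threshold is still strictly below $L$, so such arbitrarily large times exist by the definition of $\limsup$), or discard those finitely many indices. With that adjustment every step goes through, including the monotonicity $t_{k-1}<\beta_{k-1}<t_k$, so $\{t_n\}\uparrow\infty$ as required.
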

Now, we are mainly interested in the global asymptotic stability of $(x^*,y^*)$. Before proceeding, we will need the following lemma.
\begin{lemma}
Let $v(t)$ be the solution of
\begin{equation}\label{N2}
v'(t)=\big[1-\tau'(v)v'(t)\big]\frac{a_1e^{-d_j\tau(v)}v(t-\tau(v))}{1+a_2v(t-\tau(v))}-a_3v(t)
\end{equation}
where the initial date $v(t)=\phi(t)\geq0$, $v(0)>0$, \mbox{for} $t\in [-\tau_M,0]$ and $a_i>0,\ \mbox{for}\  i=1,2,3$. Then $\lim_{t\to\infty}v(t)=\tilde{v}$ providing
\begin{equation*}
a_1e^{-d_j\tau(\tilde v)}>a_3, \ \mbox{where} \ \ \tilde{v}=\frac{a_1e^{-d_j\tau(\tilde v)}-a_3}{a_2a_3}.
\end{equation*}
\end{lemma}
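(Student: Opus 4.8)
The plan is to show that $v$ is positive, bounded and uniformly persistent, and then to squeeze $M:=\limsup_{t\to\infty}v(t)$ and $m:=\liminf_{t\to\infty}v(t)$ down to the common value $\tilde v$ using the Fluctuation and Barb\u{a}lat Lemmas together with the monotonicity built into the equation. First I would solve (\ref{N2}) for the derivative, writing $v'(t)=\big[P(t)-a_3v(t)\big]/\big[1+\tau'(v)P(t)\big]$ with $P(t)=a_1e^{-d_j\tau(v)}v(t-\tau(v))/\big(1+a_2v(t-\tau(v))\big)\ge0$. Since the denominator exceeds $1$ and $P(t)\ge0$, one gets $v'(t)\ge-a_3v(t)$, hence $v(t)\ge v(0)e^{-a_3t}>0$ for all $t$; and since $P(t)<a_1e^{-d_j\tau_m}/a_2$ is bounded, $v'(t)<0$ whenever $v(t)$ is large, so $M<\infty$. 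It is convenient to introduce $\Phi(w)=\big(a_1e^{-d_j\tau(w)}-a_3\big)/(a_2a_3)$, which is continuous and, because $\tau$ is nondecreasing by $(A_2)$, strictly decreasing; the equilibrium relation is exactly $\Phi(\tilde v)=\tilde v$, and since $a_1e^{-d_j\tau(0)}\ge a_1e^{-d_j\tau(\tilde v)}>a_3$ we have $\Phi(0)>0$, so $\Phi$ has a unique positive fixed point, namely $\tilde v$.

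The crucial step, which I expect to be the main obstacle, is to prove $m>0$; the pointwise identities produced by the Fluctuation Lemma cannot by themselves exclude $v(t)\to0$ (at the $\liminf$ they merely relate a small value to a small delayed value), so a genuinely dynamical argument is required. Here I would exploit the threshold $a_1e^{-d_j\tau(0)}>a_3$: when $v$ is small, $\tau(v(t))$ is close to $\tau_m$, the saturation factor $q/(1+a_2q)$ is close to $q$, and $v'\to0$ so the correction factor $1-\tau'(v)v'$ is close to $1$; thus near the origin (\ref{N2}) is bounded below by a linear delay equation whose reproduction rate $\approx a_1e^{-d_j\tau_m}>a_3$ makes its zero solution unstable. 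Making this precise by a uniform-persistence argument in the spirit of Claim~\textbf{A} (Hale--Waltman), or by comparison with an exponentially growing subsolution $ce^{\lambda t}$ where $\lambda>0$ solves the associated characteristic equation, shows that $v$ cannot approach $0$, hence $m>0$.

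With $0<m\le M<\infty$ in hand, suppose for contradiction that $m<M$. The Fluctuation Lemma then supplies $t_n\uparrow\infty$ with $v(t_n)\to M$, $v'(t_n)=0$ and $s_n\uparrow\infty$ with $v(s_n)\to m$, $v'(s_n)=0$. The decisive simplification is that $v'(t_n)=v'(s_n)=0$ forces $1-\tau'(v)v'=1$ at these instants, so (\ref{N2}) reduces to $a_3v(t_n)=a_1e^{-d_j\tau(v(t_n))}v(t_n-\tau(v(t_n)))/\big(1+a_2v(t_n-\tau(v(t_n)))\big)$, and similarly at $s_n$. Passing to a subsequence along which the bounded delayed value $v(t_n-\tau(v(t_n)))$ converges to some $L\le M$, and using that $q\mapsto q/(1+a_2q)$ is increasing together with $v(t_n)\to M$, I obtain $a_3M\le a_1e^{-d_j\tau(M)}M/(1+a_2M)$, i.e. $M\le\Phi(M)$; since $\Phi$ is strictly decreasing with $\Phi(\tilde v)=\tilde v$, this forces $M\le\tilde v$. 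The symmetric computation at $s_n$, where the delayed value has a limit $\ge m$, gives $m\ge\Phi(m)$, hence $m\ge\tilde v$. Combining, $\tilde v\le m\le M\le\tilde v$ contradicts $m<M$; therefore $m=M$ and the limit $v_\infty:=\lim_{t\to\infty}v(t)\in(0,\infty)$ exists.

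Finally, once the limit exists I would invoke the Barb\u{a}lat Lemma. Because $v$ and the delayed term are bounded and $\tau'$ is bounded, $v'$ is bounded, and differentiating the closed-form expression for $v'$ shows that $v'$ is uniformly continuous on $[0,\infty)$ (this uniform-continuity check is the secondary technical point); hence $v'(t)\to0$. Letting $t\to\infty$ in (\ref{N2}), using $v(t)\to v_\infty$, $v(t-\tau(v(t)))\to v_\infty$ (the delay is bounded and $v$ converges), $\tau(v(t))\to\tau(v_\infty)$ and $1-\tau'(v)v'\to1$, I get $0=a_1e^{-d_j\tau(v_\infty)}v_\infty/(1+a_2v_\infty)-a_3v_\infty$. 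Dividing by $v_\infty>0$ yields $v_\infty=\Phi(v_\infty)$, so by uniqueness of the fixed point $v_\infty=\tilde v$, completing the proof.
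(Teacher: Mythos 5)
Your proposal is correct in its overall architecture but takes a genuinely different route from the paper's. The paper proves positivity and boundedness roughly as you do, but then splits the convergence argument into an eventually-monotone case (limit exists, must be an equilibrium, zero excluded by a linearized-instability assertion) and an oscillatory case, where it hand-constructs sequences of successive suprema of local maxima and runs an iteration estimate that uses the monotonicity of $\tau$ and the identity $a_1e^{-d_j\tau(\tilde v)}/\big(a_3(1+a_2\tilde v)\big)=1$. Your fluctuation-lemma squeeze ($M\leq\Phi(M)$ and $m\geq\Phi(m)$, with $\Phi(w)-w$ strictly decreasing) replaces that entire case analysis, while exploiting the same structural fact the paper exploits: at critical times $v'=0$, so the state-dependent correction factor $1-\tau'(v)v'$ equals $1$ and the equation becomes algebraic. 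Your route is cleaner and, notably, avoids the weakest step of the paper's oscillatory case: the paper infers $y(s_j)\to0$ from $y(s_j)<y(s_{j-1})$ with a contraction factor exactly equal to $1$, which does not by itself force convergence to zero. In exchange, your route must establish uniform persistence $m>0$ up front, a statement the paper never isolates.

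That persistence step is the one genuine gap in your write-up: you correctly flag it as the main obstacle but carry out neither of your two strategies. The comparison-with-$ce^{\lambda t}$ idea is risky here precisely because comparison principles for state-dependent-delay equations are delicate (the paper's own Lemma 1 on this point is itself flagged as problematic in the source). The Hale--Waltman/``spirit of Claim A'' suggestion is the right one, and it can be made rigorous without any comparison principle: choose $\epsilon_0>0$ small so that $\alpha':=a_1e^{-d_j\tau(\epsilon_0)}/(1+a_2\epsilon_0)>a_3$; whenever $v\leq\epsilon_0$ on the whole window $[t-\tau_M,t]$ one has $v'(t)\geq\big(1-\tau'(v)v'(t)\big)\alpha' v\big(t-\tau(v(t))\big)-a_3v(t)$, so the functional $L(t)=v(t)+\alpha'\int_{t-\tau(v(t))}^{t}v(s)\,ds$ (the same functional used in the paper's Claims A and B) satisfies $L'(t)\geq(\alpha'-a_3)v(t)>0$ there; combined with $v'\geq-a_3v$, which gives $L(t)\leq v(t)\big(1+\alpha'\tau_Me^{a_3\tau_M}\big)$, this forbids both $v(t)\to0$ and $\liminf_{t\to\infty}v(t)=0<\limsup_{t\to\infty}v(t)$. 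With that inserted, your proof is complete. One small slip to fix: $\tau'\geq0$ only makes $\Phi$ nonincreasing, not strictly decreasing; what is strictly decreasing is $\Phi(w)-w$, which is all your uniqueness and squeezing arguments actually require.
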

\begin{proof}
First we prove the positiveness and eventually uniformly boundedness of $v(t)$.
Assume that, there exists $t^*>0$, such that $v(t^*)=0$ and $y(t)>0$ for every $t\in(0,t^*)$. Then we have
\begin{equation*}
v'(t)\geq-a_3v~~~ \forall t\in[-\tau_M,t^*]
\end{equation*}
Therefore
\begin{equation*}
v(t)\geq v_0e^{-a_3t}>0
\end{equation*}
let $t=t^*$, we have
\begin{equation*}
0=v(t^*)\geq v_0e^{-a_3t^*}>0
\end{equation*}
which is a contradiction. So no such $t^*$ exists, and we obtain the positiveness results.

Proof of boundedness is divided into two steps.
\begin{itemize}
\item[(i)]
Suppose that $v'(t)\geq0$ for all $t>T$ for some $T\geq0$. Then for $t>T+\tau_M$,
\begin{equation*}
\begin{split}
0\leq v'(t)&= \big[1-\tau'(v)v'(t)\big]\frac{a_1e^{-d_j\tau(v)}v(t-\tau(v))}{1+a_2v(t-\tau(v))}-a_3v(t)\\
&\leq\frac{a_1e^{-d_j\tau(v)}v(t)}{1+a_2v(t)}-a_3v(t)\\
&=v(t)\Big[\frac{a_1e^{-d_j\tau(v)}}{1+a_2v(t)}-a_3\Big]
\end{split}
\end{equation*}
since $v(t-\tau(v))\leq v(t)$.  This means that
$$v(t)\leq \frac{a_1e^{-d_j\tau(v)}-a_3}{a_2a_3}\leq \frac{a_1e^{-d_j\tau(0)}-a_3}{a_2a_3}, \ \ \mbox{for} \ \ t>T+\tau_M.$$
\item[(ii)]
Assume that $v(t)$ is not eventually monotonic. There is a sequence $\{t_n\}_{n=1}^{\infty}$ such that $v'(t_n)=0$, and $v(t_n)$ is a local maximum. We can further choose the subsequence (still
denote $\{t_n\}_{n=1}^{\infty})$ such that $v(t)\leq v(t_n)$ for all $0<t<t_n$ and $n$. Then by a similar analysis at $t=t_n$, it follows that the solutions $v(t)$ is bounded above by a bound.
\end{itemize}
Let us next deal with the case when $v(t)$ is eventually monotonic. For this case, there exists $0\leq v<\infty$  such that $\lim_{t\to\infty} v(t)=\tilde{v}$ and $\lim_{t\to\infty}v '(t)=0$. Hence from system (\ref{N2}), taking the limit as $t\to\infty$, we get that
\begin{equation*}
0=\lim_{t\to\infty}v'(t)=\tilde{v}\Big(\frac{a_1e^{-d_j\tau(\tilde v)}}{1+a_2\tilde{v}}-a_3\big)
\end{equation*}
Thus $\tilde{v}=0$ or $\tilde{v}=\frac{a_1e^{-d_j\tau(\tilde v)}-a_3}{a_2a_3}$ if $a_1e^{-d_j\tau(\tilde v)}>a_3$. So,  this limit must be an equilibrium of (\ref{N2}) and is therefore either zero or the value stated. Zero is ruled
out since a standard linearized analysis yields that the zero solution of (\ref{N2}) is linearly unstable under the stated condition on $a_1e^{-d_j\tau(\tilde v)}>a_3$. Therefore, $\tilde{v}=\frac{a_1e^{-d_j\tau(\tilde v)}-a_3}{a_2a_3}$.

The rest of the case to discuss is that in which $v(t)$ is neither eventually monotonically
increasing nor decreasing. Now, we assume that $v(t)$ is oscillatory. Then $v(t)$ has an infinite sequence of local maxima and define the sequence $\{t_j\}$ as those times for which $v'(t_j)=0$ and $v''(t_j)<0$. Here, we will only discuss in detail the case of the local maximum $v(t_j)>\tilde{v}$ for all $j = 1, 2, 3,\cdots$, and other cases can be dealt with analogously.

Now, we prove that $\sup_{t\geq t_1}v(t)=v(t_k)$ for some integer $k$. Otherwise, after every local maximum $v(t_j)$ there is another that is higher, and therefore a subsequence of $\{t_j\}$ (still relabelled $\{t_j\}$) can be chosen with the property that $v(t)<v(t_j)$ for all $t_1\leq t <t_j$ and each $j$. The subsequence is selected by including each local maximum which is smaller than every one before it. By assumption $\tau(v)>0$ and $t_j-\tau(v(t_j))<t_j$, for each $j$
\begin{equation*}
\begin{split}
0=v'(t_j)&=\big[1-\tau'(v(t_j))v'(t_j)\big]\frac{a_1e^{-d_j\tau(v(t_j))}v\big(t_j-\tau(v(t_j))\big)}{1+a_2v(t_j-\tau(v(t_j)))}-a_3v(t_j)\\
&<v(t_j)\Big[\frac{a_1e^{-d_j\tau(\tilde v)}}{1+a_2v(t_j)}-a_3\Big]\\
&<v(t_j)\Big[\frac{a_1e^{-d_j\tau(\tilde v)}}{1+a_2\tilde{v}}-a_3\Big]=0
\end{split}
\end{equation*}
this is a contradiction. So, $\sup_{t\geq t_1}v(t)=v(t_k)$ for some integer $k$ and we let $s_1 =t_k$. Now, by applying the same analysis to the interval $t\geq t_{k+1}$, the existence of a $t_l(l>k)$ with $\sup_{t\geq t_{k+1}}v(t)=v(t_l)$ can be obtained, and we set $s_2=t_l$. Continuing this process, we obtain an infinite sequence $\{s_j\}$ of times such that $s_{j+1}>s_j$ as $s_j\to\infty$, and $v(t)\geq v(s_j)$ for all $t>s_j$, and $v'(s_j)=0$.

Let $y(t)=v(t)-\tilde{v}$. Next, we will prove that $y(t)\to0$ as $t\to\infty$. We have got a sequence $y(s_j)>y(s_{j+l})>0$ (since $v(s_j)\geq v(s_{j+l})$ and $v(s_j)>\tilde{v}$), and it is now enough to show that $y(s_j)\to0$ as $j\to\infty$. In terms of $y$, equation (\ref{N2}) becomes, at $t=s_j$,
\begin{equation*}
\begin{split}
0=y'(s_j)&=\big[1-\tau'(v(s_j))v'(s_j)\big]\frac{a_1e^{-d_j\tau(v(s_j))}v[s_j-\tau(v(s_j))]}{1+a_2v[s_j-\tau(v(s_j))]}-a_3v(s_j)\\
&=\frac{\Big[a_1[y(s_j-\tau(y(s_j)+\tilde{v})]+a_1\tilde{v}\Big]e^{-d_j\tau(y(s_j)+\tilde {v})}}{1-a_2y[s_j-\tau(y(s_j)+\tilde{v})]+a_2\tilde{v}}-a_3(y(s_j)+\tilde{v}).\\
&<\frac{\Big[a_1[y(s_j-\tau(y(s_j)+\tilde{v})]+a_1\tilde{v}\Big]e^{-d_j\tau(\tilde {v})}}{1-a_2y[s_j-\tau(y(s_j)+\tilde{v})]+a_2\tilde{v}}-a_3(y(s_j)+\tilde{v}).
\end{split}
\end{equation*}
since $\tau(\tilde v)<\tau\big(y(s_j)+\tilde{v}\big).$
By the sequence $\{s_j\}$, we choose a final subsequence, once again denoted $\{s_j\}$, so that $s_j-\tau_M\geq s_{j-1}$. Then $y(s_j-s)<y(s_{j-1})$ for all $s\in [0, \tau_M]$ and therefore
\begin{equation*}
\begin{split}
a_3(y(s_j)+\tilde{v})&<\frac{\Big[a_1y[s_j-\tau(y(s_j)+\tilde{v})]+a_1\tilde{v}\Big]e^{-d_j\tau(\tilde {v})}}{1+a_2y[s_j-\tau(y(s_j)+\tilde{v})]+a_2\tilde{v}}\\
&<\frac{[a_1y(s_{j-1})+a_1\tilde{v}]e^{-d_j\tau(\tilde {v})}}{1+a_2y(s_{j-1})+a_2\tilde{v}}\\
&<\frac{[a_1y(s_{j-1})+a_1\tilde{v}]e^{-d_j\tau(\tilde {v})}}{1+a_2\tilde{v}}
\end{split}
\end{equation*}
so that
\begin{equation*}
y(s_j)<\frac{a_1e^{-d_j\tau(\tilde {v})}}{a_3(1+a_2\tilde{v})}y(s_{j-1})
\end{equation*}
noting that $\tilde{v}=\frac{a_1e^{-d_j\tau(\tilde {v})}-a_3}{a_2a_3}$, we have $\frac{a_1e^{-d_j\tau(\tilde {v})}}{a_3(1+a_2\tilde{v})}=1$ and it is independent of $j$. Therefore, $y(s_j)\to 0$ as $j\to\infty$. We summarize that $\lim_{t\to\infty}v(t) =\tilde{v}$ and complete the proof of this theorem.
\end{proof}
\begin{remark}
The global stability of a single-population model without delay time-derivative  term is discussed in \cite{16}. Now we use the same method to prove that for a single-population model with delay time-derivative  term, the conclusion is still true.
\end{remark}

Next, we prove the global stability of $(x^*,y^*)$ in system (\ref{N1}).
\begin{theorem}
The positive equilibrium $(x^*,y^*)$ in system (\ref{N1}) is globally attractive provided that $\frac{nbe^{-d_j\tau(y^*)}K}{1+k_1K}>d$ and
\begin{equation}\label{K2}
k_2>\max\Big\{ \frac{bK(nbe^{-d_j\tau(y^*)}-dk_1)}{r\big[(nbe^{-d_j\tau(y^*)}-dk_1)K-d\big]}, \frac{bK(nbe^{-d_j\tau(y^*)}-dk_1)}{rd}, \frac{b}{r}\Big\}
\end{equation}
holds true.
\end{theorem}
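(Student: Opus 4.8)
The plan is to prove global attractiveness by the classical iteration/sandwich argument, i.e.\ the method of fluctuations combined with the comparison principle, squeezing the $\limsup$ and $\liminf$ of both components toward the positive equilibrium. Write $\bar X=\limsup_{t\to\infty}x(t)$, $\underline X=\liminf_{t\to\infty}x(t)$, $\bar Y=\limsup_{t\to\infty}y(t)$, $\underline Y=\liminf_{t\to\infty}y(t)$. By permanence (Theorem \ref{TH1}) together with the boundedness result, these four quantities are finite and strictly positive, and the standing hypothesis $\frac{nbe^{-d_j\tau(y^*)}K}{1+k_1K}>d$ is exactly the permanence threshold specialized to the BD response and evaluated at $y^*$.

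First I would build monotone bracketing sequences. From the first equation alone, $x'\le rx(1-x/K)$ gives $\bar X\le K=:M_1^x$. Feeding $x(t-\tau(y))\le M_1^x$ into the second equation and using that the BD response is increasing in $x$, the $y$-dynamics is dominated by the scalar equation of the limit lemma (Lemma 6) with $a_1=\frac{nbM_1^x}{1+k_1M_1^x}$, $a_2=\frac{k_2}{1+k_1M_1^x}$, $a_3=d$; the comparison principle (Lemma 1) together with that lemma then yields $\bar Y\le M_1^y$, where $M_1^y$ solves the implicit relation $M_1^y=\frac{a_1e^{-d_j\tau(M_1^y)}-d}{a_2d}$. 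Reversing the inequalities and inserting $\underline Y$ into the predation loss of the first equation produces a lower bound $m_1^x\le\underline X$ (the positive root of $r(1-x/K)=\frac{bM_1^y}{1+k_1x+k_2M_1^y}$), and feeding $x(t-\tau(y))\ge m_1^x$ into the second equation gives, via the reversed comparison and Lemma 6, a lower bound $m_1^y\le\underline Y$. Iterating this four-step cycle generates sequences $\{M_n^x\},\{m_n^x\},\{M_n^y\},\{m_n^y\}$ with $m_n^x\le\underline X\le\bar X\le M_n^x$ and $m_n^y\le\underline Y\le\bar Y\le M_n^y$ for every $n$.

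Next I would show by induction that $\{M_n^x\},\{M_n^y\}$ are non-increasing and $\{m_n^x\},\{m_n^y\}$ are non-decreasing, with $m_n^x\le M_n^x$ and $m_n^y\le M_n^y$; hence all four sequences converge, say to $\bar x\ge\underline x$ and $\bar y\ge\underline y$. Passing to the limit in the defining relations, these four limits satisfy the paired algebraic system $r(1-\bar x/K)=\frac{b\underline y}{1+k_1\bar x+k_2\underline y}$, $r(1-\underline x/K)=\frac{b\bar y}{1+k_1\underline x+k_2\bar y}$, together with $\bar y=\frac{nb\bar x\,e^{-d_j\tau(\bar y)}-d(1+k_1\bar x)}{k_2d}$ and its lower-bound analogue. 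Note that with $\bar x,\bar y\to x^*,y^*$ this last relation reduces to the equilibrium identity in (\ref{PE}). The goal is then to prove $\bar x=\underline x$ and $\bar y=\underline y$, which forces the common value to be $(x^*,y^*)$ by uniqueness of the positive equilibrium, and global attractiveness follows.

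The hard part is precisely this final collapse of the gaps $\bar x-\underline x$ and $\bar y-\underline y$ to zero. Subtracting the paired limit relations and estimating the differences, the predator-interference condition (\ref{K2}) is exactly what makes the composed map $(\bar x,\bar y)\mapsto(\underline x,\underline y)\mapsto\cdots$ a contraction: a large $k_2$ flattens the BD response, so each self-feedback coefficient appearing in the difference inequalities stays strictly below $1$. I expect each of the three quantities inside the $\max$ in (\ref{K2}) to arise from one such sensitivity estimate, namely the $x$-from-$y$ step, the $y$-from-$x$ step, and the closing monotonicity estimate for $x^*$ obtained in the local-stability analysis of (\ref{N1}). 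A pleasant feature is that the delay time-derivative term $1-\tau'(y)y'(t)$ and the state dependence of $\tau$ never need to be estimated directly, since Lemma 6 already absorbs them into the scalar comparison equation; the only subtlety there is to verify at each stage that the threshold $a_1e^{-d_j\tau(\cdot)}>a_3$ required by Lemma 6 persists, which is guaranteed by $\frac{nbe^{-d_j\tau(y^*)}K}{1+k_1K}>d$ together with the monotonicity of $\tau$.
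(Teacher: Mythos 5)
Your proposal follows essentially the same route as the paper's proof: monotone upper/lower bracketing sequences for $x$ and $y$ built from the logistic bound $x\le K$, the comparison principle (Lemma 1) and the scalar limit lemma (Lemma 6), followed by a contraction-type estimate in which the largeness of $k_2$ in (\ref{K2}) forces the gaps $\overline{x}-\underline{x}$ and $\overline{y}-\underline{y}$ to vanish, so the common limit is the positive equilibrium. One small correction: the persistence of the Lemma 6 threshold along the \emph{lower}-bound iterates is not guaranteed by the standing hypothesis $\frac{nbe^{-d_j\tau(y^*)}K}{1+k_1K}>d$ alone, as you suggest; it is exactly what the first term in the max of (\ref{K2}) delivers (the paper's inequality (\ref{Z1})), while the second term is the contraction coefficient and the third ($k_2>b/r$) keeps the logistic lower comparison for $x$ positive.
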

\begin{proof}
By the first equation of (\ref{N1}) and the arguments to Theorem $2$, for sufficiently small $\varepsilon>0$, there is a $T_1>0$ such that $x(t)<K+\varepsilon=\overline{x}_1$ for $t\geq T_1$. It is easy to see $x^*<\overline{x}_1$ for $t\geq T_1$. Replacing this inequality into the second equation of (\ref{N1}), since $1-\tau'(y)y'(t)>0$, we have
$$y'(t)<\big[1-\tau'(y)y'(t)\big]\frac{nbe^{-d_j\tau(y)}\overline{x}_1y(t-\tau(y))}{1+k_1\overline{x}_1+k_2y(t-\tau(y))}-dy(t),~~t\geq T_1+\tau_M$$
Consider the system
\begin{equation*}
\left\{
             \begin{split}
             v'(t)=&\big[1-\tau'(y)y'(t)\big]\frac{nbe^{-d_j\tau(y)}\overline{x}_1v(t-\tau(v))}{1+k_1\overline{x}_1+k_2v(t-\tau(v))}-dv(t), ~~t\geq T_1+\tau_M  \\
             v(t)=&y(t), ~~ t\in[t_1,T_1+\tau_M]
             \end{split}
\right.
\end{equation*}
Noting $nbe^{-d_j\tau(y^*)}\overline{x}_1-d(1+k_1\overline{x}_1)>nbe^{-d_j\tau(y^*)}K-d(1+k_1K)>0$. Thus by Lemma $6$, we have
$$\lim_{t\to\infty}v(t)=\frac{nbe^{-d_j\tau(y^*)}\overline{x}_1-d(1+k_1\overline{x}_1)}{k_2d}>0.$$
By the Lemma $1$, we have $y(t)\leq v(t)$, $t\geq T_1+\tau_M$. Then for the sufficiently small $\varepsilon>0$, there exists $T_2>T_1+\tau_M$ such that
\begin{equation}\label{Q1}
y(t)<\frac{nbe^{-d_j\tau(y^*)}\overline{x}_1-d(1+k_1\overline{x}_1)}{k_2d}+\varepsilon =\overline{y}_1,~~t\geq T_2.
\end{equation}
By the first equation of (\ref{PE}), we have $$y^*<\frac{nbe^{-d_j\tau(y^*)}\overline{x}_1-d(1+k_1\overline{x}_1)}{k_2d}<\overline{y}_1, ~~t\geq T_2.$$

Replacing (\ref{Q1}) into the first equation of (\ref{N1}), we have
$$x'(t)>rx(t)(1-\frac{x(t)}{K})-\frac{bx(t)\overline{y}_1}{1+k_2\overline{y}_1},~~t\geq T_2.$$
By (\ref{K2}), $r>\frac{b}{k_2}>\frac{b\overline{y}_1}{1+k_2\overline{y}_1}$. Using the comparison theorem, for sufficiently small $\varepsilon>0$, there is a $T_3>T_2$ such that
\begin{equation}\label{Q2}
x(t)>z^*-\varepsilon=\underline{x}_1>0,~~t\geq T_3.
\end{equation}
where $z^*=K\big[1-\frac{b\overline{y}_1}{r(1+k_2\overline{y}_1)}\big]>0$ is the positive root for the equation
$$rx(t)(1-\frac{x(t)}{K})-\frac{bx(t)\overline{y}_1}{1+k_2\overline{y}_1}=0.$$
By the second equation of (\ref{PE}), we have
$$r(1-\frac{x^*}{K})<\frac{b\overline{y}_1}{1+k_2\overline{y}_1}=r(1-\frac{z^*}{K}), ~~t\geq T_3.$$
i.e.,for sufficiently small $\varepsilon>0$, $$x^*>z^*-\varepsilon=\underline{x}_1, ~~t\geq T_3.$$

Replacing (\ref{Q2}) into the second equation of (\ref{N1}), we have
$$y'(t)>\big[1-\tau'(y)y'(t)\big]\frac{nbe^{-d_j\tau(y)}\underline{x}_1y(t-\tau(y))}{1+k_1\underline{x}_1+k_2y(t-\tau(y))}-dy(t),~~t\geq T_3+\tau_M$$
By (\ref{Q2}), we have
\begin{equation*}
\begin{split}
nbe^{-d_j\tau(y^*)}\underline{x}_1-d(1+k_1\underline{x}_1)=&(nbe^{-d_j\tau(y^*)}-dk_1)\big\{K\big[1-\frac{b\overline{y}-1}{r(1+k_2\overline{y}_1)}\big]-\varepsilon\big\}-d\\
>&(nbe^{-d_j\tau(y^*)}-dk_1)\big\{K\big[1-\frac{b}{rk_2}\big]-\varepsilon\big\}-d\\
=&\frac{(nbe^{-d_j\tau(y^*)}-dk_1)(K-\varepsilon)-d}{k_2}\\
&\cdot\big\{k_2-\frac{bK(nbe^{-d_j\tau(y^*)}-dk_1)}{r[(nbe^{-d_j\tau(y^*)}-dk_1)(K-\varepsilon)-d]}\big\}.
\end{split}
\end{equation*}
Using (\ref{K2}), we can get
\begin{equation}\label{Z1}
nbe^{-d_j\tau(y^*)}\underline{x}_1-d(1+k_1\underline{x}_1)>0\ \ \ \mbox{for sufficiently small} \ \ \varepsilon.
\end{equation}
By Lemma $6$ and the similar arguments to $\overline{y}_1$, for the above selected $\varepsilon>0$, there exists $T_4>T_3+\tau$ such that
\begin{equation}\label{Q3}
y(t)>\frac{nbe^{-d_j\tau(y^*)}\underline{x}_1-d(1+k_1\underline{x}_1)}{k_2d}-\varepsilon =\underline{y}_1,~~t\geq T_4.
\end{equation}
and $$y^*>\underline{y}_1, ~~t\geq T_4.$$
Therefore we have that
$$\underline {x}_1<x(t)<\overline{x}_1,\ \ \underline{y}_1<y(t)<\overline{y}_1,\ \ t\geq T_4,$$
$$\underline {x}_1<x^*<\overline{x}_1,\ \ \underline{y}_1<y^*<\overline{y}_1,\ \ t\geq T_4,$$

hold for system (\ref{N1}).

Replacing (\ref{Q3}) into the first equation of (\ref{N1}), we have
$$x'(t)<rx(t)(1-\frac{x(t)}{K})-\frac{bx(t)\underline{y}_1}{1+k_1\overline{x}_1+k_2\underline{y}_1},~~t\geq T_4.$$
Since $r-\frac{b\underline{y}_1}{1+k_1\overline{x}_1+k_2\underline{y}_1}>r-\frac{b\overline{y}_1}{1+k_2\overline{y}_1}>0$, by the comparison theorem, for sufficiently small $\varepsilon>0$, there is a $T_5>T_4$ such that
\begin{equation}\label{Q4}
x(t)<z_1^*+\varepsilon=\overline{x}_2>0,~~t\geq T_5,
\end{equation}
with $z_1^*=K\big[1-\frac{b\underline{y}_1}{r(1+k_2\underline{y}_1)}\big]>0$. By the similar arguments to $\underline{x}_1$, we have $$x^*< \overline{x}_2, t\geq T_5.$$
From the definition of $\overline{x}_2$ we get
$$\overline{x}_2<K<\overline{x}_1.$$

Replacing (\ref{Q4}) into the second equation of (\ref{N1}), we have
$$y'(t)<\big[1-\tau'(y)y'(t)\big]\frac{nbe^{-d_j\tau(y)}\overline{x}_2y(t-\tau(y))}{1+k_1\overline{x}_2+k_2y(t-\tau(y))}-dy(t),~~t\geq T_5+\tau_M$$
Since $\overline{x}_2>\underline{x}_1$ and noting (\ref{Z1}), we have
$nbe^{-d_j\tau(y^*)}\overline{x}_2-d(1+k_1\overline{x}_2)>nbe^{-d_j\tau(y^*)}\underline{x}_1-d(1+k_1\underline{x}_1)>0$.
Thus using arguments similar to above, for the sufficiently small $\varepsilon>0$, there is a $T_6>T_5+\tau_M$ such that
\begin{equation}\label{Q5}
y(t)<\frac{nbe^{-d_j\tau(y^*)}\overline{x}_2-d(1+k_1\overline{x}_2)}{k_2d}+\varepsilon =\overline{y}_2,~~t\geq T_6.
\end{equation}
by (\ref{Q1}), (\ref{Q5}) we get $y^*<\overline{y}_2<\overline{y}_1.$

Replacing (\ref{Q5}) into the first equation of (\ref{N1}), we have
$$x'(t)>rx(t)(1-\frac{x(t)}{K})-\frac{bx(t)\overline{y}_2}{1+k_2\overline{y}_2},~~t\geq T_6.$$
From (\ref{K2}), $r>\frac{b}{k_2}>\frac{b\overline{y}_1}{1+k_2\overline{y}_1}>\frac{b\overline{y}_2}{1+k_2\overline{y}_2}$. Then by the comparison theorem, for sufficiently small $\varepsilon>0$, there is a $T_7>T_6$ such that
\begin{equation}\label{Q6}
x(t)>z_2^*-\varepsilon=\underline{x}_2>0,~~t\geq T_7,
\end{equation}
where $z_2^*=K\big[1-\frac{b\overline{y}_2}{r(1+k_2\overline{y}_2)}\big]>0$. By the definition of $\underline{x}_2$, we have $\underline{x}_2>\underline{x}_1>x^*$.

Replacing (\ref{Q6}) into the second equation of (\ref{N1}), then by arguments similar to those for $\overline{y}_2$, we get that there exists a $T_8>T_7+\tau_M$ such that
\begin{equation}\label{Q7}
y(t)>\frac{nbe^{-d_j\tau(y)}\underline{x}_2-d(1+k_1\underline{x}_2)}{k_2d}-\varepsilon =\underline{y}_2,~~t\geq T_8.
\end{equation}
and we get $y^*>\underline{y}_2>\underline{y}_1$.

Therefore we have that
$$0<\underline {x}_1<\underline {x}_2<x(t)<\overline{x}_2<\overline{x}_1,\ \ 0<\underline{y}_1<\underline{y}_2<y(t)<\overline{y}_2<\overline{y}_1,\ \ t\geq T_8.$$
$$0<\underline {x}_1<\underline {x}_2<x^*<\overline{x}_2<\overline{x}_1,\ \ 0<\underline{y}_1<\underline{y}_2<y^*<\overline{y}_2<\overline{y}_1,\ \ t\geq T_8.$$
Repeating the above arguments, we get the four sequences
$\{\overline{x}_n\}_{n=1}^\infty$,
$\{\underline{x}_n\}_{n=1}^\infty$,
$\{\overline{y}_n\}_{n=1}^\infty$,
$\{\underline{y}_n\}_{n=1}^\infty$ with
\begin{equation}\label{Q8}
\begin{split}
0<&\underline{x}_1<\underline{x}_2<\cdots<\underline{x}_n<x(t)
  <\overline{x}_n<\cdots<\overline{x}_2<\overline{x}_1,
\\
0<&\underline{y}_1<\underline{y}_2<\cdots<\underline{y}_n<y(t)
<\overline{y}_n<\cdots<\overline{y}_2<\overline{y}_1,\ \ t\geq
T_{4n}.
\end{split}
\end{equation}
\begin{equation}\label{Zqq}
\begin{split}
0<&\underline{x}_1<\underline{x}_2<\cdots<\underline{x}_n<x^*
  <\overline{x}_n<\cdots<\overline{x}_2<\overline{x}_1,
\\
0<&\underline{y}_1<\underline{y}_2<\cdots<\underline{y}_n<y^*
<\overline{y}_n<\cdots<\overline{y}_2<\overline{y}_1,\ \ t\geq
T_{4n}.
\end{split}
\end{equation}
From (\ref{Q8}) follows that the limit of each sequence in $\{\overline{x}_n\}_{n=1}^\infty$,
$\{\underline{x}_n\}_{n=1}^\infty$,
$\{\overline{y}_n\}_{n=1}^\infty$,
$\{\underline{y}_n\}_{n=1}^\infty$ exist. Denote
$$\overline{x}=\lim_{n\rightarrow\infty}\overline{x}_n,\ \
\overline{y}=\lim_{n\rightarrow\infty}\overline{y}_n,\ \
\underline{x}=\lim_{n\rightarrow\infty}\underline{x}_n,\ \
\underline{y}=\lim_{n\rightarrow\infty}\underline{y}_n.$$
thus we get $\overline{x}\geq \underline{x},\ \overline{y}\geq
\underline{y}.$ To complete the proof, it suffices to prove
$\overline{x}=\underline{x}=x^*,\ \overline{y}=\underline{y}=y^*.$

By the definition of $\overline{y}_n,\ \underline{y}_m,$ we have
$$\overline{y}_n=\frac{nbe^{-d_j\tau(0)}\overline{x}_n-d(1+k_1\overline{x}_n)}{k_2d}+\varepsilon,\ \
\underline{y}_m=\frac{nbe^{-d_j\tau(0)}\underline{x}_m-d(1+k_1\underline{x}_m)}{k_2d}-\varepsilon.$$
then we get
\begin{equation}\label{Q9}
\overline{y}_n-\underline{y}_m=\frac{nbe^{-d_j\tau(0)}-dk_1}{k_2d}(\overline{x}_n-\underline{x}_m)+2\varepsilon.
\end{equation}
By the definition of $\overline{x}_n,\ \underline{x}_n$ and using (\ref{Q9}), we have
\begin{equation*}
\begin{split}
\overline{x}_n-\underline{x}_n&=K\Big[1-\frac{b\underline{y}_{n-1}}{r(1+k_2\underline{y}_{n-1})}\Big]-K\Big[1-\frac{b\underline{y}_n}{r(1+k_2\underline{y}_n)}\Big]+2\varepsilon\\
&=\frac{bK}{r}\Big[\frac{\overline{y}_n-\underline{y}_{n-1}}{(1+k_2\underline{y}_{n-1})(1+k_2\underline{y}_n)}\Big]+2\varepsilon\\
&=\frac{bK}{r}\frac{[nbe^{-d_j\tau(0)}-dk_1]/k_2d(\overline{x}_n-\underline{x}_{n-1})+2\varepsilon}{(1+k_2\underline{y}_{n-1})(1+k_2\underline{y}_n)}+2\varepsilon\\
&<\frac{bK}{k_2dr}[nbe^{-d_j\tau(0)}-dk_1](\overline{x}_n-\underline{x}_{n-1})+2\varepsilon\big(1+\frac{bK}{r}\big).
\end{split}
\end{equation*}
Let $n\to\infty$, then we have
$$\overline{x}-\underline{x}\leq\frac{bK}{k_2dr}[nbe^{-d_j\tau(0)}-dk_1](\overline{x}-\underline{x})+2\varepsilon\big(1+\frac{bK}{r}\big),$$
thus$$\Big\{1-\frac{bK}{k_2dr}[nbe^{-d_j\tau(0)}-dk_1]\Big\}(\overline{x}-\underline{x})\leq2\varepsilon\big(1+\frac{bK}{r}\big).$$
From (\ref{K2}), we have $1-\frac{bK}{k_2dr}[nbe^{-d_j\tau(0)}-dk_1]>0$, and noting that $\varepsilon>0$ can be arbitrarily small, then we have
$\overline{x}=\underline{x}.$ By (\ref{Q9}) and let
$n,m\rightarrow\infty,$ we get $\overline{y}=\underline{y}.$
From (\ref{Zqq}), $\overline{x}=\underline{x}=x^*,\ \overline{y}=\underline{y}=y^*.$ This proves Theorem $8$. \end{proof}
\begin{corollary}\label{coro1}
The positive equilibrium $(x^*,y^*)$ in system (\ref{N1}) is globally asymptotically stable provided that $\frac{nbe^{-d_j\tau(y^*)}K}{1+k_1K}>d$  and that the conditions (\ref{k2}), (\ref{K2}) hold true.
\end{corollary}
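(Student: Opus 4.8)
The plan is to derive global asymptotic stability directly from two results already at our disposal: the local asymptotic stability of $(x^*,y^*)$ established in the preceding local-stability result (Theorem 7) and its global attractiveness established in Theorem 8. Since an equilibrium is globally asymptotically stable exactly when it is both Lyapunov stable and globally attractive, and since local asymptotic stability already supplies Lyapunov stability, it suffices to verify that the hypotheses of those two theorems hold simultaneously under the conditions of the corollary. No fresh dynamical analysis is required; the work is entirely in reconciling the assumptions.

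First I would confirm that the standing assumption $\frac{nbe^{-d_j\tau(y^*)}K}{1+k_1K}>d$ forces the permanence demanded by Theorem 7. Because $\tau$ is nondecreasing by $(A_2)$ and $y^*>0$, we have $\tau(y^*)\ge\tau(0)$ and hence $e^{-d_j\tau(y^*)}\le e^{-d_j\tau(0)}$, so that
$$ne^{-d_j\tau(0)}f(K,0)=\frac{nbe^{-d_j\tau(0)}K}{1+k_1K}\ge\frac{nbe^{-d_j\tau(y^*)}K}{1+k_1K}>d.$$
By Theorem \ref{TH1}, this inequality is precisely the permanence criterion for system (\ref{N1}) (with the Beddington--DeAngelis response $f(K,0)=\frac{bK}{1+k_1K}$), so the permanence hypothesis of Theorem 7 is met.

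With permanence secured and condition (\ref{k2}) assumed, Theorem 7 yields that $(x^*,y^*)$ is locally asymptotically stable, and in particular Lyapunov stable. Independently, the pair of assumptions $\frac{nbe^{-d_j\tau(y^*)}K}{1+k_1K}>d$ and (\ref{K2}) is exactly the hypothesis of Theorem 8, which guarantees that every positive solution of (\ref{N1}) converges to $(x^*,y^*)$, i.e.\ global attractiveness on the positive cone. Combining the Lyapunov stability furnished by Theorem 7 with the global attraction furnished by Theorem 8 gives, by definition, global asymptotic stability, which is the assertion of the corollary.

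The one point demanding care is the mutual consistency of the hypotheses: one must check that conditions (\ref{k2}) and (\ref{K2}) can be imposed together, that the accompanying sign requirement $d\le d_j$ from (\ref{k2}) is carried along, and that the global-attractiveness condition $\frac{nbe^{-d_j\tau(y^*)}K}{1+k_1K}>d$ genuinely implies the permanence invoked by Theorem 7, as verified above. Once these compatibility checks are in place, the corollary follows at once from the two theorems, so I expect this bookkeeping—rather than any new estimate—to be the only obstacle.
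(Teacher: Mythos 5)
Your proposal is correct and matches the paper's intent: the corollary is stated as an immediate consequence of combining Theorem 7 (local asymptotic stability under permanence and (\ref{k2})) with Theorem 8 (global attractiveness under the stated inequality and (\ref{K2})), which is precisely your argument. Your explicit check that $\frac{nbe^{-d_j\tau(y^*)}K}{1+k_1K}>d$ implies the permanence criterion $ne^{-d_j\tau(0)}f(K,0)>d$ via the monotonicity of $\tau$ is a detail the paper leaves implicit, and it is a welcome addition.
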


\begin{remark}
 From Theorem and Corollary \ref{coro1}, it is shown that if the system (\ref{N1})  is permanent, that the   derivative of SDTD on the state $y$ is small enough  such that $|\tau(y^*)-\tau(0)|$ is small enough, and that the predator interference $k_2$ is large enough, then  the coexistence equilibrium $(x^*,y^*)$ in system (\ref{N1})   is globally asymptotically stable. Theorem 8 and Corollary \ref{coro1} directly extend  \cite[Theorem 4.1]{6}
\end{remark}

\section{Conclusions and discussions}
In this paper, based on the biological observations that during World War II the maturation time of seals and whales was not a fixed value, but depended on the mature population, starting with an age-structured model (\ref{M1}), we formulated and analyzed a prey-predator stage-structured model with SDTD.

Compared with the previous SDTD models \cite{10,13,14}, model (\ref{M2}) is not directly changing the constant delay $\tau$ into a SDTD $\tau(y(t))$ but was obtained by reducing the age-structured population model, which has the delay time-derivative  term $\tau '(y(t))\cdot y'(t)\leq 1$. Biologically speaking, model (\ref{M2}) is appropriate in terms of population modeling. On the one hand, with the SDTD, the changes in the number of mature individuals depend on reproduction and death and the changing definition of maturity, which is in line with the delay time-derivative  term $ \tau '(y(t))\cdot y'(t)\leq 1$. On the other hand, we can represent $y(t)$ and $y_j(t)$ in an integral form by some biological inductions, namely
\begin{equation*}
\begin{split}
 y(t)&=e^{-dt} \int_{0}^{t} e^{ds}n(1-\tau'(y(s))y'(s)) e^{-d_j\tau(y(s))}f(x(s-\tau(y(s)),y(s-\tau(y(s)))\\
&y(s-\tau(y(s)))ds\\
y_j(t)&=\int_{t-\tau(y)}^{t}nf(x(s),y(s))y(s)e^{d_j(t-s)}ds
\end{split}
\end{equation*}
Taking the derivatives of $y(t)$ and $y_j(t)$, we obtain the second and third equation of model (\ref{M2}).

 From a biological point of view, we show that $t-\tau(y(t))$ should be a strictly increasing function of $t$ without any conditions and the derivative with respect to time of the SDTD $\tau(y(t))$ is strictly less than one. In addition, it is biologically reasonable for the assumption of the delay $\tau(y(t))$. The biological phenomenon mentioned above, a non-decreasing delay, implies that a more mature predator leads to a longer developmental duration, and it makes clear the stabilizing effect \cite{5,23}.

 Mathematically compared with \cite{13,14}, first of all, the positivity and boundedness of
solutions are discussed, which do not need the stringent condition on $\tau(y(t))$ to ensure the positivity of $y$ and $y_j$.  Then we give the conditions which are both necessary and sufficient for the permanence and extinction of system (\ref{M2}), and local asymptotic stability of trivial and the boundary equilibria is investigated. Finally, taking the BD-type functional response function as an example, the local asymptotic stability and global attractiveness of positive equilibrium of the model is discussed. It shows that as long as $k_2$ is big enough, the positive equilibrium $E(x^*,y^*,y_j^*)$ will reach a steady state.

Further research in this direction may consider more realistic complex models, for example,
\begin{equation*}
\begin{split}
x'(t)=&rx(t)(1-\frac{x(t)}{K})-f(x(t),y(t))y(t),\\
y'(t)=&n(1-\tau'(x)x'(t)) e^{-d_j\tau(x)}f(x(t-\tau(x),y(t-\tau(x)))y(t-\tau(x))-dy(t),\\
y_j'(t)=&-n(1-\tau'(x)x'(t)) e^{-d_j\tau(x)}f(x(t-\tau(x),y(t-\tau(x)))y(t-\tau(x))\\
&+nf(x(t),y(t))y(t)-d_jy_j(t).
\end{split}
\end{equation*}
where $\tau (x)$ is a decreasing continuously  differentiable bounded function of the prey population $x(t)$. And,
\begin{equation*}
\begin{split}
x'(t)=&rx(t)(1-\frac{x(t)}{K})-f(x(t),y(t))y(t),\\
y'(t)=&n(1-\tau'(z)z'(t)) e^{-d_j\tau(z)}f(x(t-\tau(z),y(t-\tau(z)))y(t-\tau(z))-dy(t),\\
y_j'(t)=&-n(1-\tau'(z)z'(t)) e^{-d_j\tau(z)}f(x(t-\tau(z),y(t-\tau(z)))y(t-\tau(z))\\
&+nf(x(t),y(t))y(t)-d_jy_j(t).
\end{split}
\end{equation*}
where $z(t)$ is the total number of mature and immature predators $y(t)+y_j(t)$, and $\tau (z)$ is an increasing continuously  differentiable bounded function of the predator population $y(t)+y_j(t)$, which will be left as our future work.

\section*{Acknowledgments}
The authors would like to thank Dr Xianning Liu  for his valuable discussions.
Q. Zhang and S. Liu are supported by  the National Natural
Science Foundation of China (No.11871179 and No.11771374).  Y. Yuan is supported by ......     Y. Lv is supported by  the National Natural
Science Foundation of China (No.11871371).

\end{document}